\numberwithin{equation}{section} 
\numberwithin{figure}{section} 
\theoremstyle{plain}
\theoremstyle{plain}
\newtheorem{thm}{Theorem}
\newtheorem{corollary}[thm]{Corollary}
\newtheorem{conjecture}[thm]{Conjecture}
  \theoremstyle{definition}
  \newtheorem{defi}[thm]{Definition}
 \theoremstyle{definition}
  \newtheorem{example}[thm]{Example}
  \theoremstyle{plain}
  \newtheorem{prop}[thm]{Proposition}
  \theoremstyle{plain}
  \newtheorem{lem}[thm]{Lemma}
  \theoremstyle{remark}
  \newtheorem{rem}[thm]{Remark}
  \theoremstyle{plain}
  \newtheorem{situation}[thm]{Situation}
  \theoremstyle{plain}
  \theoremstyle{remark}
  \newtheorem*{rem*}{Remark}
\newcommand\lc{\ensuremath{\mathrm{HC}}}
\newcommand\lm{\ensuremath{\mathrm{HT}}}
\newcommand\sm{\ensuremath{\mathrm{ST}}}
\newcommand\rp{\ensuremath{\mathcal{P}}}
\newcommand\mt{\ensuremath{\mathbf{T}}}
\newcommand\red{\ensuremath{\mathrm{red}}}
\newcommand\mf{\ensuremath{\mathbf{F}}}
\newcommand\ind{\ensuremath{\textrm{index}}}
\newcommand\poly{\ensuremath{\mathrm{poly}}}
\newcommand\lab{\ensuremath{\mathcal{S}}}
\newcommand\Sig{\ensuremath{\mathcal{\mathcal{S}}}}
\newcommand\K{\ensuremath{\mathcal{K}}}
\newcommand\Kx{\ensuremath{\mathcal{K}[\underline{x}]}}
\newcommand\lcm{\ensuremath{\textrm{lcm}}}
\begin{document}
\nocite{dlSingularBook2006}
\nocite{bdPolyBoRi2009}
\nocite{bSlimGB2005}
\nocite{bcpMagma}
\nocite{arsPhd2005}
\nocite{dgpsSingular311}
\bibliographystyle{plain}
\title{Modifying Faug\`{e}re's F5 Algorithm to ensure termination}
\author{Christian Eder}
\address{Department of Mathematics, TU Kaiserslautern, P.O. Box 3049\\
67653 Kaiserslautern, Germany}
\email{ederc@mathematik.uni-kl.de}
\urladdr{http://www.mathematik.uni-kl.de/~ederc/}
\author{Justin Gash}
\address{Justin Gash\\Department of Mathematics\\Franklin College\\101 Branigin Blvd.\\Franklin, IN 46131 USA}
\email{JGash@franklincollege.edu}
\urladdr{http://www.franklincollege.edu/about-fc/department-directory/individual?employee=000168429}
\author{John Perry}
\address{John Perry\\Department of Mathematics\\University of Southern Mississippi\\Hattiesburg, MS 39406-5045 USA}
\email{john.perry@usm.edu}
\urladdr{http://www.math.usm.edu/perry/}

\maketitle
\begin{abstract}
The structure of the F5 algorithm to compute Gr\"obner bases makes it very efficient.
However, it is not clear whether it terminates for all inputs, not even for ``regular sequences''.

This paper has two major parts.
In the first part, we describe in detail the difficulties related to a proof of termination.
In the second part, we explore three variants that ensure termination.
Two of these have appeared previously in dissertations,
and ensure termination by checking for a Gr\"obner basis using traditional criteria.
The third variant, F5+, identifies a degree bound
using a distinction between ``necessary'' and ``redundant'' critical pairs
that follows from the analysis in the first part.
Experimental evidence suggests this third approach is the most efficient of the three.
\end{abstract}
\section{Introduction}
The computation of a Gr\"obner basis is a central step in the solution of many problems
of computational algebra.
First described in 1965 by Bruno Buchberger
\cite{bGroebner1965}, researchers have proposed a number of important reformulations of his initial 
idea \cite{bGroebnerCriterion1970,
bGroebnerCriterion1979, gmInstallation1988, fF41999, bSlimGB2005,
bdPolyBoRi2009,mmtSyzygies1992}.
Faug\`ere's F5 Algorithm, published in 2002 \cite{fF52002Corrected}, is in many cases
the fastest, most efficient of these reformulations.
Due to its powerful criteria, the
algorithm computes very few zero-reductions,
and if the input is a so-called ``regular sequence'', it never reduces
a polynomial to zero (see Section~\ref{sec:basics} for basic definitions).
In general, reduction to zero is \emph{the} primary bottleneck
in the computation of a Gr\"obner basis;
moreover, many of the most interesting polynomial ideals are regular sequences.
It is thus no surprise that F5 has succeeded at computing many Gr\"obner bases
that were previously intractable \cite{fF52002Corrected, Fau05CStar}.

An open question surrounding the F5 algorithm regards termination.
In a traditional algorithm to compute a Gr\"obner basis,
the proof of termination follows from the algorithm's ability to exploit
the Noetherian property of polynomial rings:
each polynomial added to the basis $G$ expands the
ideal generated by the leading monomials of $G$,
and this can happen only a finite number of times.
In F5, however, the same criteria that detect reduction to zero
also lead the algorithm to add to $G$ polynomials
which do not expand the ideal of leading terms.
We call these polynomials \emph{redundant}.
Thus, although the general belief is that F5 terminates at least for regular sequences,
no proof of termination has yet appeared,
\emph{not even if the inputs are a regular sequence} (see Remark~\ref{rem:problemprooftermination}).
On the other hand, at least one system of polynomials has been proposed as examples of non-termination
(one in the source code accompanying~\cite{stF5Rev2005}),
but this system fails only on an incorrect implementation of F5.

Is it possible to modify F5 so as to ensure termination?
Since the problem of an infinite loop is due to the appearance of redundant
polynomials, one might be tempted simply to discard them.
Unfortunately, as we show in Section~\ref{sec:problem},
this breaks the algorithm's correctness.
Another approach is to supply, or compute, a degree bound,
and to terminate once this degree is reached.
Tight degree bounds are known for regular and ``semi-regular'' sequences~\cite{laz83,bfs05},
but not in general,
so for an arbitrary input it is more prudent to calculate a bound based on the data.
To that end,
\begin{itemize}
\item \cite{gashPhD2008} tests for zero-reductions of these redundant polynomials (Section~\ref{sub:f5t}); whereas
\item \cite{arsPhd2005} applies Buchberger's lcm criterion (or ``chain'' criterion) on critical pairs (Section~\ref{sub:F5Brs}).
\end{itemize}
These approaches rely exclusively on traditional criteria that are extrinsic to the F5 algorithm,
so they must interrupt the flow of the basic algorithm
to perform a non-trivial computation,
incurring an observable penalty to both time and memory.

This paper shows that it is possible to guarantee termination by relying primarily
on the criteria that are intrinsic to the F5 algorithm.
After a review of the ideas and the terminology in Section~\ref{sec:basics},
we show precisely in Theorem~\ref{lem:f5criticalrepresentation} of Section~\ref{sec:problem}
why one cannot merely discard the redundant polynomials \emph{in medio res}:
many of these redundant polynomials
are ``necessary'' for the algorithm's correctness.
Section~\ref{sec:f5+} uses this analysis to describe a new approach
that distinguishes between two types of critical pairs:
those that generate polynomials necessary for the Gr\"obner basis,
and those that generate polynomials ``only'' needed for the correctness of F5.
This distinction allows one to detect the point where all necessary data for the
Gr\"obner basis has been computed. 
We then show how to implement this approach in a manner
that incurs virtually no penalty to performance (Section~\ref{sub:termcorr}).
Section~\ref{sec:results} shows that this new variant, which we call F5+,
\begin{itemize}
\item computes a reasonably accurate degree bound for a general input,
\item relies primarily (and, in most observed cases, only) on criteria intrinsic to F5, and 
\item minimizes the penalty of computing a degree bound.
\end{itemize}
Section~\ref{sec:conclusion} leaves the reader with a conjecture that,
if true, could compute the degree bound even more precisely.

We assume the reader to be familiar with
\cite{fF52002Corrected}, as the modifications are described
using the pseudo code and the notations stated there.  
\section{Basics}\label{sec:basics}

Sections \ref{subsec:polybasics}--\ref{subsec:groebnerbasics} give a short review of notations and basics of polynomials
and Gr\"obner bases; Section \ref{subsec:f5basics} reviews the basic
ideas of F5.

For a more detailed introduction on non-F5 basics we refer the reader
to \cite{gpSingularBook2007}. Readers familiar with these topics 
may want to skim this section for notation and terminology.
\subsection{Polynomial basics}\label{subsec:polybasics}
Let $\K$ be a field, $\rp := \Kx$ the polynomial ring over $\K$ in the variables
$\underline{x} := (x_1,\ldots,x_n)$. Let $T$ denote the set of terms $\{x^\alpha\}
\subset \rp$, where $x^\alpha := \prod_{i=1}^n x_i^{\alpha_i}$ and $\alpha_i \in
\mathbb{N}$.

A \emph{polynomial} $p$ over $\K$ is a finite $\K$-linear combination of
terms, i.e. $p = \sum_{\alpha} a_\alpha x^\alpha\in\rp, a_\alpha \in \K$.
The \emph{degree of p} is the integer
$\deg(p) = \mathrm{max}\{\alpha_1+\dots+\alpha_n \mid a_\alpha \neq 0\}$ for $p
\neq 0$ and $\deg(p)=-1$ for $p=0$.

In this paper $>$ denotes a fixed admissible ordering on the terms
$T$. W.r.t. $>$ we can write any nonzero $p$ in a
unique way as
\[ p = a_\alpha x^\alpha + a_\beta x^\beta+\ldots+a_\gamma x^\gamma,\quad
x^\alpha > x^\beta > \cdots > x^\gamma\]
where $a_\alpha,a_\beta,\dots,a_\gamma \in \K\backslash\{0\}$.
We define the 
\emph{head term of p} $\lm(p)=x^\alpha$ and the \emph{head
coefficient of p} $\lc(p) = a_\alpha$.
\subsection{Gr\"obner basics}
\label{subsec:groebnerbasics}
We work with homogeneous ideals $I$ in $\rp$.
For any $S\subset \rp$ let $HT(S) := \langle \lm(p) \mid p \in S 
\backslash \{0\}\rangle$. A finite set $G$ is called a \emph{Gr\"obner
basis} of an ideal $I$ if $G \subset
I$ and $HT(I) = HT(G)$. Let $p \in \rp$.
If $p=0$ or there exist $\lambda_i \in \rp, q_i \in G$ such that $p = \sum_{i=1}^k \lambda_i q_i$
and $\lm(p) \geq \lm(\lambda_i q_i)$ for all nonzero $q_i$,
then we say that there exists a \emph{standard representation of $p$ w.r.t. $G$},
or that $p$ \emph{has a standard representation w.r.t $G$}. 
We generally omit the phrase ``w.r.t. $G$'' when it is clear from the context.

Let $p_i,p_j \in \rp$. We define the \emph{s-polynomial of the critical pair $(p_i,p_j)$} to be 
\[p_{ij} := \lc(p_j) \frac{\gamma_{ij}}{\lm(p_i)}p_i-
\lc(p_i) \frac{\gamma_{ij}}{\lm(p_j)}p_j \]
where $\gamma_{ij} := \lcm\left(\lm(p_i),\lm(p_j)\right)$.\\
\begin{thm} 
  \label{thm:gb}
  Let $I$ be an ideal in $\rp$ and $G \subset I$ finite. $G$ is a
  Gr\"obner basis of $I$ iff for all $p_i,p_j \in G$ $p_{ij}$ has a standard
  representation.
\end{thm}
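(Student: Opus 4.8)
The plan is to prove the two implications separately; the forward one is routine, while the reverse is the substance of Buchberger's theorem.

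$(\Rightarrow)$ Suppose $G$ is a Gr\"obner basis of $I$. First I would record the standard consequence that then \emph{every} nonzero $p \in I$ has a standard representation w.r.t. $G$: since $\lm(p) \in HT(I) = HT(G)$, some $\lm(g)$ with $g \in G$ divides $\lm(p)$, so subtracting the appropriate term multiple $\frac{\lc(p)}{\lc(g)}\frac{\lm(p)}{\lm(g)}\,g$ (whose head term is exactly $\lm(p)$) yields $p' \in I$ with $\lm(p') < \lm(p)$; iterating terminates because $>$ well-orders $T$, and collecting the multiples used gives $p = \sum \lambda_i g_i$ with $\lm(\lambda_i g_i) \le \lm(p)$ for every nonzero summand. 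Applying this to $p_{ij}$ — which lies in $I$, being a $\rp$-combination of $p_i, p_j \in G \subset I$ — gives the required standard representation.

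$(\Leftarrow)$ Assume every $p_{ij}$ has a standard representation; I want $HT(I) = HT(G)$. Here I take $I = \langle G \rangle$ (implicit in the statement — and $HT(I)=HT(G)$ with $G\subset I$ would in any case force it). Since $G \subset I$ already gives $HT(G) \subseteq HT(I)$, it suffices to show $\lm(f) \in HT(G)$ for each nonzero $f \in I$. Among all representations $f = \sum_i h_i g_i$ with $h_i \in \rp$, $g_i \in G$, I choose one for which $\delta := \max\{\lm(h_i g_i) : h_i \neq 0\}$ is minimal with respect to $>$ (a minimum exists since $>$ well-orders $T$; homogeneity of $I$ would also permit a degree-induction variant). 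One always has $\delta \geq \lm(f)$, and if $\delta = \lm(f)$ then $\lm(f) = \lm(h_{i_0} g_{i_0})$ for some $i_0$, so $\lm(g_{i_0}) \mid \lm(f)$ and $\lm(f) \in HT(G)$, as wanted. So the task is to rule out $\delta > \lm(f)$.

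Assume $\delta > \lm(f)$ and let $S = \{ i : \lm(h_i g_i) = \delta \}$. Comparing the coefficient of the monomial $\delta$ on both sides of $f = \sum_i h_i g_i$ (it is $0$ on the left, as $\delta > \lm(f)$) gives $\sum_{i \in S} \lc(h_i)\lc(g_i) = 0$. Writing $h_i g_i = \lc(h_i)\lm(h_i) g_i + r_i g_i$ with $\lm(r_i g_i) < \delta$, the ``top part'' $\sum_{i \in S} \lc(h_i)\lm(h_i) g_i$ telescopes — using $\sum_{i\in S}\lc(h_i)\lc(g_i) = 0$ — into a $\K$-linear combination of shifted s-polynomials $\frac{\delta}{\gamma_{ij}}\,p_{ij}$ with $i,j \in S$; the point is that $\gamma_{ij} = \lcm(\lm(g_i),\lm(g_j))$ divides $\delta$ (both $\lm(g_i)$ and $\lm(g_j)$ do), so the shift is a term, and the head terms of the two parts of $p_{ij}$ cancel, giving $\lm\!\left(\frac{\delta}{\gamma_{ij}} p_{ij}\right) < \delta$. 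Substituting the hypothesised standard representation $p_{ij} = \sum_l \lambda_l^{(ij)} g_l$, with $\lm(p_{ij}) \geq \lm(\lambda_l^{(ij)} g_l)$, and multiplying through by the term $\delta / \gamma_{ij}$ (which preserves these inequalities), turns $\frac{\delta}{\gamma_{ij}} p_{ij}$ into a $\rp$-combination of the $g_l$ all of whose summands have head term $< \delta$. Assembling this combination together with the remainders $r_i g_i$ for $i \in S$ and the summands $h_i g_i$ for $i \notin S$ — all of head term $< \delta$ — produces a new representation $f = \sum_l \mu_l g_l$ with $\max_l \lm(\mu_l g_l) < \delta$, contradicting the minimality of $\delta$. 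Hence $\delta = \lm(f)$, and $HT(I) = HT(G)$.

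I expect the telescoping step — rewriting the degree-$\delta$ clash among the $h_i g_i$, $i \in S$, as a combination of s-polynomials with a genuine drop in head term — to be the crux; everything else is bookkeeping, resting only on the mild background facts that $>$ is a well-ordering of $T$ and that $\langle G \rangle = I$.
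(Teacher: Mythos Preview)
Your argument is correct; it is the standard proof of Buchberger's criterion, including the syzygy/telescoping lemma that rewrites a head-term cancellation as a $\K$-combination of shifted s-polynomials. The paper does not actually prove this statement: its entire proof is a citation to Theorem~5.64 and Corollary~5.65 of Becker--Weispfenning--Kredel, \emph{Gr\"obner Bases}, which is exactly the argument you have written out in detail. Your parenthetical remark that the hypothesis $I=\langle G\rangle$ is implicit (and in fact forced by the conclusion together with $G\subset I$) is also apt; the paper's statement leaves this tacit.
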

\begin{proof}
  See Theorem 5.64 and Corollary 5.65 in \cite[pp. 219--221]{bwkGroebnerBases1993}.
\end{proof}

In addition to inventing the first algorithm to compute Gr\"obner bases,
Buchberger discovered two relatively efficient criteria
that imply when one can skip an s-polynomial reduction~\cite{bGroebner1965,bGroebnerCriterion1979}.
We will refer occasionally to the second of these criteria.

\begin{thm}[Buchberger's lcm criterion]\label{def:buch2}
  Let $G\subset\rp$ be finite, and $p_i,p_j,p_k\in\rp$.
  If
  \begin{itemize}
    \item[(A)] $\lm(p_k)\mid\lcm(\lm(p_i),\lm(p_j))$, and
    \item[(B)] $p_{ik}$ and $p_{jk}$ have standard representations w.r.t.~$G$,
  \end{itemize}
  then $p_{ij}$ also has a standard representation w.r.t.~$G$.
\end{thm}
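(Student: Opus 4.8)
The plan is to reduce the claim to the classical ``s-polynomial syzygy'' identity, which is the only step requiring a genuine computation; the rest is bookkeeping in the term ordering. I first record two divisibilities that follow from hypothesis~(A). Write $\gamma_{ij}=\lcm(\lm(p_i),\lm(p_j))$ and define $\gamma_{ik},\gamma_{jk}$ analogously. Since $\lm(p_k)\mid\gamma_{ij}$ by~(A) and $\lm(p_i)\mid\gamma_{ij}$ trivially, $\gamma_{ij}$ is a common multiple of $\lm(p_i)$ and $\lm(p_k)$, hence $\gamma_{ik}\mid\gamma_{ij}$; symmetrically $\gamma_{jk}\mid\gamma_{ij}$. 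Thus $\gamma_{ij}/\gamma_{ik}$ and $\gamma_{ij}/\gamma_{jk}$ are honest terms in $T$, and expanding the definitions of $p_{ik}$, $p_{jk}$, $p_{ij}$ and collecting the $p_i$-, $p_j$-, $p_k$-contributions verifies
\[
  p_{ij}
  \;=\;
  \frac{\lc(p_j)}{\lc(p_k)}\cdot\frac{\gamma_{ij}}{\gamma_{ik}}\cdot p_{ik}
  \;-\;
  \frac{\lc(p_i)}{\lc(p_k)}\cdot\frac{\gamma_{ij}}{\gamma_{jk}}\cdot p_{jk}.
\]

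Next I combine this with hypothesis~(B) via two elementary facts. First, standard representations scale: if $q$ has a standard representation w.r.t.~$G$ and $c\in\K\setminus\{0\}$, $t\in T$, then $ctq$ has a standard representation w.r.t.~$G$ with head term $t\lm(q)$, since multiplying the representation by $ct$ preserves each inequality $\lm(q)\ge\lm(\lambda_lq_l)$ because $>$ is admissible. Second, s-polynomial head terms strictly drop by construction: the two summands in the definition of $p_{ik}$ both have head term $\lc(p_i)\lc(p_k)\gamma_{ik}$, which therefore cancels, so $\lm(p_{ik})<\gamma_{ik}$; likewise $\lm(p_{jk})<\gamma_{jk}$ and $\lm(p_{ij})<\gamma_{ij}$ (whenever the s-polynomial in question is nonzero; if it is zero the corresponding assertion is trivial). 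Hence $(\gamma_{ij}/\gamma_{ik})\lm(p_{ik})<\gamma_{ij}$ and $(\gamma_{ij}/\gamma_{jk})\lm(p_{jk})<\gamma_{ij}$, so the displayed identity exhibits $p_{ij}$ as a sum of two polynomials, each possessing a standard representation w.r.t.~$G$ and each with head term strictly below $\gamma_{ij}$. Concatenating those two representations produces $p_{ij}=\sum_l\lambda_lq_l$ with $q_l\in G$ and $\lm(\lambda_lq_l)<\gamma_{ij}$ for every nonzero summand.

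It then remains only to upgrade this to a \emph{standard} representation, i.e.\ to one in which each summand head term is $\le\lm(p_{ij})$. This is immediate unless the two polynomials produced above have the same head term and their head terms cancel in the sum, leaving $\lm(p_{ij})$ strictly below that common term; in that single case the syzygy identity by itself does not suffice and one must appeal to the classical cancellation (telescoping) lemma for Gr\"obner bases --- the machinery that also underlies Theorem~\ref{thm:gb}, cf.~\cite{bwkGroebnerBases1993} --- to rewrite the cancelling portion and descend in the (well-ordered) term ordering until the head-term bound of the representation matches $\lm(p_{ij})$. I expect this final bookkeeping to be the only real obstacle to a self-contained proof; the most economical write-up may well be to establish the displayed identity and then cite \cite{bwkGroebnerBases1993} for the passage from ``all summand head terms below $\gamma_{ij}$'' to ``standard representation''.
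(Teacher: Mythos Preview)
The paper does not supply a proof of this theorem; it is stated as a classical result and left unproved. Your route via the syzygy identity
\[
  p_{ij}=\frac{\lc(p_j)}{\lc(p_k)}\,\frac{\gamma_{ij}}{\gamma_{ik}}\,p_{ik}-\frac{\lc(p_i)}{\lc(p_k)}\,\frac{\gamma_{ij}}{\gamma_{jk}}\,p_{jk}
\]
is exactly the standard approach in the literature, and your derivation of a representation with every summand head term strictly below $\gamma_{ij}$ is correct.

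The gap you flag in the final paragraph, however, is more serious than you suggest, and your proposed fix does not close it. The ``cancellation lemma'' you invoke rewrites a cancelling top layer as a $\K$-linear combination of term-multiples of s-polynomials of elements of $G$; to descend further you would then need \emph{those} s-polynomials to have standard representations w.r.t.~$G$, which nothing in the hypotheses guarantees. Iterating therefore stalls unless $G$ is already (close to) a Gr\"obner basis. Concretely, the abstract lemma ``if $f,g$ each have a standard representation and their head terms cancel, then $f-g$ has a standard representation'' is false: take $G=\{z+w,\,z-w\}$ in $\K[z,w]$ with lex order, $f=z+w$, $g=z-w$. What your argument genuinely proves is a $t$-representation of $p_{ij}$ with $t<\gamma_{ij}$, which is precisely the form of the chain criterion in \cite{bwkGroebnerBases1993} and which suffices for the Gr\"obner-basis characterization cited in Theorem~\ref{thm:gb}; the literal upgrade to the head-term bound $\lm(p_{ij})$ is not available from (A) and (B) alone. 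If you want a self-contained write-up, state and prove the $t$-representation version and note that it is what the applications in the paper actually require.
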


In the homogeneous case one can define a {\it $d$-Gr\"obner basis} $G_d$
of an ideal $I$: This is a basis of $I$ for which all s-polynomials up
to degree $d$ have standard representations (cf. Definition 10.40 in
\cite[p. 473]{bwkGroebnerBases1993}).

The following definition is crucial for understanding the problem of
termination of F5.
\begin{defi}
  \label{def:redundantpoly}
  Let $G$ be a finite set of polynomials in $\rp$. We say that $p \in G$ is
  {\it redundant} if there exists an element $p'\in G$ such that $p'\neq p$ and $\lm(p') \mid
  \lm(p)$.
\end{defi}
\begin{rem}
  While computing a Gr\"obner basis, a Buchber\-ger-style algorithm
  does \emph{not} add polynomials that are redundant \emph{at the
  moment they are added to the basis}, although the addition of
  other polynomials to the basis later on may render them redundant. This
  ensures termination, as it expands the ideal of leading monomials, and $\rp$
  is Noetherian. However, F5 adds many elements
  that are redundant \emph{even when they are added to the basis};
  see Section \ref{sec:problem}.
\end{rem}
It is easy and effective to interreduce the elements of the initial ideal
before F5 starts, so that the input contains only non-redundant polynomials;
in all that follows, we assume that this is the case.
However, even this does not prevent F5 from generating redundant polynomials.

Finally, we denote by $\varphi(p,G)$ the normal form of $p$ with respect
to the Gr\"obner basis $G$.
\subsection{F5 basics}
\label{subsec:f5basics}

It is beyond the scope of this paper to delve into all the details of F5;
for a more detailed discussion we refer the reader to
\cite{fF52002Corrected}, \cite{eF5Criteria2008}, and \cite{epF5C2009}.
In particular, we do not consider the details of correctness for F5, which are
addressed from two different perspectives in
\cite{fF52002Corrected} and \cite{epF5C2009}.
This paper is concerned
with
showing that the algorithm can be modified so that termination is guaranteed,
\emph{and}
that the modification does not disrupt the correctness of
the algorithm.


In order to make the explanations more focused and concise,
we now adapt some basic definitions and notation of \cite{fF52002Corrected}.
Let $\mf_i$ be the $i$-th canonical generator of $\rp^m$.
Denote $\mt = \cup_{i=1}^m\mt_i$ where $\mt_i = \{t\mf_i \mid t \in T\}$ and $R = \mt \times \rp$.
Define $\prec$, the extension of $<$ to $\mt$, by $t\mf_i \prec u\mf_j$ iff
\begin{enumerate}
  \item $i>j$, or
  \item $i=j$ and $t<u$.
\end{enumerate}
It is easy to show that $\prec$ is a well-ordering of $\mt$,
which implies that there exists a minimal representation in terms of the generators.
\begin{defi}\label{def: signature}
  Let $p\in\rp$ and $t\in T$.
  We say that $t\mf_i$ is the \textit{signature of }$p$ if
  there exist $h_i,\ldots,h_m\in\rp$ such that each of the following holds:
  \begin{itemize}
    \item $p=\sum_{k=i}^m h_k f_k$ and $\lm(h_i)=t$, and
    \item for any $H_j,\ldots,H_m\in\rp$ such that $p=\sum_{k=j}^m H_k f_k$ and $H_j\neq 0$,
      we have $t\mf_i\preceq\lm(H_j)\mf_j$.
  \end{itemize}
\end{defi}
\begin{defi}\label{def: labpoly}
  Borrowing from \cite{stF5Rev2005}, we call the element 
  \[r=(t\mf_i,p) \in R\] 
  of \cite{fF52002Corrected} a \emph{labeled polynomial}.
  (It is referred to as the representation of a polynomial in \cite{fF52002Corrected}.)
  We also denote
  \begin{enumerate}
    \item the \textit{polynomial part of r} $\poly(r) = p$,
    \item the \textit{signature of r} $\Sig(r) = t\mf_i$, and
    \item the \textit{signature term of r} $\sm(r) = t$, and
    \item the \textit{index of r} $\ind(r) = i$.
  \end{enumerate}  
  Following \cite{fF52002Corrected}, we extend the following operators to $R$:
  \begin{enumerate}
    \item $\lm(r) = \lm(p)$.
    \item $\lc(r) = \lc(p)$.
    \item $\deg(r) = \deg(p)$.
  \end{enumerate}
Let $0 \neq c \in \K$, $\lambda \in T$, $r=(t\mf_i,p) \in R$. Then we define the
following operations on $R$ resp. $\mt$: 
\begin{enumerate}
    \item $cr = (t\mf_i,cp)$,
    \item $\lambda r = (\lambda t \mf_i,\lambda p)$,
    \item $\lambda(t\mf_i) = (\lambda t)\mf_i$.
\end{enumerate}
\end{defi}
\emph{Caveat lector: }Although we call $\lab(r)$ the signature of $r$ in Definition~\ref{def: labpoly},
it might not be the signature of $\poly(r)$ as defined in Definition~\ref{def: signature}.
If the input is non-regular, it can happen (and does) that F5 reduces an s-polynomial $r_{ij}$ to zero.
The reductions are all with respect to lower signatures, so we have\[
  \poly(r_{ij})=\sum_{k=1}^{\# G}h_k \poly(r_k)
\] where $h_k\neq 0$ implies that $\lab(\lm(h_k)\cdot r_k)\prec\lab(r_{ij})$.
The signature of $r_{ij}$ is thus no larger than $\max_{h_k\neq 0}\{\lab(\lm(h_k)\cdot r_k)\}$;
that is, the signature of $r_{ij}$ is strictly smaller than $\lab(r_{ij})$.

On the other hand, Propositions~\ref{prop:ST correct} and~\ref{prop: suff cond for correct sig}
show that the algorithm does try to ensure that $\lab(r)$ is the signature of $\poly(r)$.
The proof of Proposition~\ref{prop:ST correct} is evident from inspection of the algorithm.
\begin{prop}
  \label{prop:ST correct}
  Let the list $F=(f_1,\ldots,f_m)\in\rp^m$ be the input of F5.
  For any labeled polynomial $r=(t\mf_i,p)$, $t\in T$, $1\leq i\leq m$, computed by the algorithm, there exist
  $h_1,\ldots,h_m \in \rp$ such that 
\begin{enumerate}
  \item $p = h_1 f_1+ \ldots+h_m f_m$,
  \item $h_1=\ldots=h_{i-1}=0$, and
  \item $\sm(r) = \lm(h_i)=t$.
\end{enumerate}
\end{prop}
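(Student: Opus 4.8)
The plan is to prove this by induction on the sequence of labeled polynomials as they are produced by F5. The statement asserts three things about any labeled polynomial $r = (t\mf_i, p)$ that the algorithm constructs: that $p$ lies in the ideal and in fact in the submodule generated by $f_i, \ldots, f_m$ (items (1) and (2)), and that the signature term $t$ is exactly the head term of the coefficient $h_i$ of $f_i$ in some such representation (item (3)). Since F5 builds labeled polynomials in only two ways—either as one of the initial generators, or via the \texttt{Spol}/\texttt{Reduction} machinery applied to previously-constructed labeled polynomials—an induction over the order of creation is the natural structure.

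First I would handle the base case. When $r$ is the initial labeled polynomial associated with $f_i$, the algorithm sets $r = (\mf_i, f_i)$, i.e.\ $t = 1$ and $p = f_i$. Then taking $h_i = 1$ and $h_k = 0$ for $k \neq i$ gives $p = \sum_k h_k f_k$ with $h_1 = \cdots = h_{i-1} = 0$ and $\sm(r) = 1 = \lm(h_i)$, so all three conditions hold trivially.

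Next comes the inductive step, which splits according to how $r$ is formed from earlier labeled polynomials. If $r = \lambda r'$ for some term $\lambda \in T$ (a multiple taken when forming an s-polynomial or a reductor), then by the operations in Definition~\ref{def: labpoly} we have $\poly(r) = \lambda\poly(r')$ and $\sm(r) = \lambda\,\sm(r')$ and $\ind(r) = \ind(r')$; applying the induction hypothesis to $r'$ with coefficients $h'_k$ and scaling by $\lambda$ gives a representation of $p$ with coefficients $\lambda h'_k$, which still vanish below index $i$, and $\lm(\lambda h'_i) = \lambda\,\lm(h'_i) = \lambda\,\sm(r') = \sm(r)$. The s-polynomial case $r_{ij} = c_j\lambda_i r_i - c_i\lambda_j r_j$: here F5 only computes $r_{ij}$ when $\Sig(\lambda_i r_i) \neq \Sig(\lambda_j r_j)$, and the signature of $r_{ij}$ is assigned to be the larger of the two, say $\Sig(\lambda_i r_i)$; one uses the induction hypothesis on $r_i$ and $r_j$, scales and subtracts the module representations, and checks that the leading term in index $\ind(r_{ij})$ comes exactly from the summand with the larger signature (the cancellation in the other summands cannot reach that index because its signature is strictly smaller). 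The reduction case is similar: a reduction step replaces $r$ by $r - \lambda u$ where $u$ is a previously-computed labeled polynomial with $\Sig(\lambda u) \prec \Sig(r)$ (top-reduction preserving signature), so again subtracting module representations keeps the index-$i$ head term coming from $r$'s old representation.

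The main obstacle I anticipate is the bookkeeping in the two subtraction cases—s-polynomial formation and signature-preserving reduction—where one must argue that subtracting a representation whose relevant signature is \emph{strictly smaller} cannot disturb the head term of the coefficient $h_i$ that carries the signature. This is really a statement that $\lm(h_i)$ is controlled by the $\prec$-maximality of $r$'s signature among the pieces, and it relies on the fact (visible from the pseudocode, and the reason the proof is called "evident from inspection of the algorithm") that F5 never performs a step that would lower a labeled polynomial's signature term: s-polynomials take the max of the two signatures, and all reductions are by strictly smaller signature. Once that invariant is extracted from the algorithm's structure, each case is a routine manipulation of module representations, and the three conclusions follow simultaneously.
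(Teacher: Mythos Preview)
Your proposal is correct and is precisely the argument the paper has in mind: the paper gives no detailed proof at all, stating only that the result ``is evident from inspection of the algorithm,'' and your induction over the order in which labeled polynomials are created (base case the inputs $(\mf_i,f_i)$, inductive step handling term multiples, s-polynomial formation, and signature-safe reduction) is exactly that inspection written out. One small point worth making explicit: in \texttt{TopReduction} there is also the branch where $u_\red\lab(r_\red)\succ\lab(r)$ and a \emph{new} labeled polynomial $r'=(u_\red\lab(r_\red),\,u_\red\poly(r_\red)-\poly(r))$ is spawned; this case is not the signature-preserving reduction you describe, but it is handled by the same reasoning as your s-polynomial case, since the new signature is again the $\prec$-maximum of the two contributing signatures.
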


Let $G=\{r_1,\dots,r_{n_G}\}\subset\rp$. We denote 
$\poly(G) = \{\poly(r_1),\dots,\poly(r_{n_G})\}$.

\begin{defi}
  \label{def:labeledstandardrepresentation}
  Let $r,r_1,\dots,r_{n_G} \in R$, $G =
  \{r_1,\dots,r_{n_G}\}$.
  Assume $\poly(r)\neq 0$.
  We say that $r$ has a \textit{standard representation
  w.r.t. G} if there exist $\lambda_1,\ldots,\lambda_{n_G} \in \rp$ such that 
  \[\poly(r) = \sum_{i=1}^{n_G} \lambda_i \poly(r_i),\] 
  $\lm(r) \geq
  \lm(\lambda_i) \lm(r_i)$ for all $i$, and $\lab(r) \succ \lm(\lambda_i) \lab(r_i)$
  for all $i$ except possibly one, say $i_0$, where $\lab(r) = \lab(r_{i_0})$ and $\lambda_{i_0}=1$.
We generally omit the phrase ``w.r.t. $G$'' when it is clear from the context.
\end{defi}
\begin{rem}\label{rem: props of std rep of lab poly}
  The standard representation of a labeled polynomial $r$ has two properties:
  \begin{enumerate}
    \item The polynomial part of $r$ has a standard representation as defined in
  Section \ref{subsec:groebnerbasics}, and 
    \item the signatures of the multiples of the $r_i$ are not greater than
      the signature of $r$.
  \end{enumerate}
  This second property makes the standard representation of a labeled polynomial
  more restrictive than that of a polynomial. 
\end{rem}
\begin{prop}\label{prop: suff cond for correct sig}
  Let the list $F=(f_1,\ldots,f_m)\in\rp^m$ be the input of F5.
  For any labeled poly\-nomial $r$ that is computed by the algorithm,
  if $r$ does not have a standard representation w.r.t.~$G$,
  then $\lab(r)$ is the signature of $\poly(r)$.
\end{prop}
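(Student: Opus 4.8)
The plan is to argue by contrapositive on the \emph{signature}: I will show that if $\lab(r)$ fails to be the signature of $\poly(r)$, then $r$ must admit a standard representation w.r.t.\ $G$. By Proposition~\ref{prop:ST correct}, the labeled polynomial $r = (t\mf_i, p)$ already carries a representation $p = h_i f_i + \cdots + h_m f_m$ with $\lm(h_i) = t$. The hypothesis that $t\mf_i$ is \emph{not} the signature of $\poly(r)$ means, by Definition~\ref{def: signature}, that there is a strictly smaller witness: some representation $p = \sum_{k=j}^m H_k f_k$ with $H_j \neq 0$ and $\lm(H_j)\mf_j \prec t\mf_i$ (equivalently, the true signature of $p$ is $\prec \lab(r)$).

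First I would recall the structural fact, highlighted in the ``Caveat lector'' discussion immediately before the proposition, that whenever F5 computes a labeled polynomial whose attached signature overshoots the true signature of its polynomial part, the polynomial part is expressible via the current basis entirely below $\lab(r)$: concretely, $\poly(r) = \sum_{k} \lambda_k \poly(r_k)$ with $\lab(\lm(\lambda_k)\cdot r_k) \prec \lab(r)$ for every nonzero $\lambda_k$. This is exactly the situation described for an s-polynomial reduced to zero, but the same reasoning applies to any $r$ whose signature is not the signature of $\poly(r)$: the ``excess'' in the signature can only have arisen because $p$ already lay in the span of lower-signature basis elements. So the key step is to extract, from the existence of the smaller witness $H_j\mf_j$, a $G$-combination of $\poly(r)$ all of whose signature-multiples are strictly below $\lab(r)$.

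Second, I would upgrade this $G$-combination to an honest standard representation in the sense of Definition~\ref{def:labeledstandardrepresentation}. The signature condition $\lab(r) \succ \lm(\lambda_k)\lab(r_k)$ is already in hand (strictly, for \emph{all} $k$, so the ``possibly one exceptional index $i_0$'' clause is not even needed). What remains is the head-term condition $\lm(r) \geq \lm(\lambda_k)\lm(r_k)$ for all $k$. Here I would appeal to the usual Buchberger-style rewriting: given any $G$-combination of $\poly(r)$, one can reduce the summands so that no leading-term cancellation pushes a $\lm(\lambda_k)\lm(r_k)$ above $\lm(p)$ — and because the signatures are \emph{already} all strictly below $\lab(r)$, such reductions never violate the signature bound, since replacing $r_k$ by lower-signature reducers keeps $\lm(\lambda_k)\lab(r_k)$ below $\lab(r)$. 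This is precisely the mechanism the F5 reduction procedure implements, so it should be invokable by inspection of the algorithm together with the well-ordering of $\prec$ on $\mt$.

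The main obstacle I anticipate is the second step: making rigorous that one can always rearrange a sub-signature $G$-combination of $\poly(r)$ into one satisfying the head-term inequality \emph{without} breaking the signature inequality. The delicate point is that cancellation of leading terms among the $\poly(r_k)$ can force the introduction of new basis multiples, and one must verify that these new multiples still respect $\lab(r) \succ \lm(\lambda_k)\lab(r_k)$; this hinges on $G$ being ``closed enough'' at the moment $r$ is examined — i.e., on the invariant that every s-polynomial of signature $\prec\lab(r)$ has already been processed and either yields a standard representation or contributes a syzygy. I would therefore phrase the proof so that this closure invariant is stated explicitly and the rewriting argument is localized to signatures strictly below $\lab(r)$, where by an inductive hypothesis on $\prec$ the needed standard representations already exist. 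Everything else is bookkeeping with the operations on $R$ defined in Definition~\ref{def: labpoly}.
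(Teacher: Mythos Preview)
Your proposal is correct and follows essentially the same approach as the paper: a contrapositive argument in which the smaller-signature representation of $\poly(r)$ is rewritten into a standard representation by repeatedly replacing head-term cancellations with s-polynomials of strictly smaller signature, relying on the well-ordering of $\prec$. The only cosmetic difference is that the paper phrases the induction as ``choose $r$ with $\lab(r)$ minimal among counterexamples'' and works directly with the $f_k$-representation (which is already a $G$-combination since the inputs lie in $G$), whereas you frame it as an explicit induction on $\prec$ with a closure invariant.
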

In other words, even if $\lab(r)$ is not the signature of $\poly(r)$,
the only time this can happen
is when $r$ already has a standard representation, so it need not be computed.
On the other hand, the converse is false: once the algorithm ceases to reduce $\poly(r)$,
$r$ \emph{does} have a standard representation, and $\lab(r)$ remains the signature of $\poly(r)$.
\begin{proof}
We show the contrapositive. Suppose that there exists some $r\in G$
such that $\lab(r)$ is not the signature of $p=\poly(r)$.
Of all the $r$ satisfying this property, choose one such that $\lab(r)$ is minimal.
Suppose $\lab(r)=t\mf_i$.
By hypothesis, we can find $h_j,\ldots,h_m\in\rp$ such that
\[p=\sum_{k=j}^m h_k f_k,\quad h_j\neq 0,\quad\textrm{ and }\quad i<j\textrm{ or }\left[i=j\textrm{ and }t>\lm(h_j)\right].\]
Is $\sum h_k f_k$ a standard representation of $r$ w.r.t.~$G$?
Probably not, but it is clear that for any $k$ such that $\lm(h_k)\lm(f_k)>\lm(p)$,
there exists $\ell$ such that $\lm(h_k)\lm(f_k)=\lm(h_\ell)\lm(f_\ell)$.
The signature of the corresponding $s$-poly\-nomial $p_{k\ell}$ is obviously smaller than $t\mf_i$,
so the hypothesis that $\lab(r)$ is minimal, along with inspection of the algorithm, implies that $r_{k\ell}$
has a standard representation w.r.t.~$G$.
Proceeding in this manner, we can rewrite $\sum h_k f_k$ repeatedly until we have
a standard representation of $r$ w.r.t.~$G$.
\end{proof}
\begin{defi}
  Let $r_i=(t_i \mf_k,p_i),r_j=(t_j \mf_\ell,p_j) \in R$.
  If $\frac{\gamma_{ij}}{\lm(r_i)}t_i\mf_k\neq\frac{\gamma_{ij}}{\lm(r_j)}t_j \mf_\ell$, then
  we define the \textit{s-polynomial of $r_i$ and $r_j$} by
  $r_{ij} := \left( m', p_{ij} \right)$
  where \[m'=\max_{\prec}\left\{\frac{\gamma_{ij}}{\lm(r_i)}t_i
  \mf_k,\frac{\gamma_{ij}}{\lm(r_j)}t_j \mf_\ell\right\}\]
  and $\gamma_{ij} = \lcm\left(\lm(r_i),\lm(r_j)\right)$.
\end{defi}
All polynomials are kept monic in F5; thus we always assume in the following that
$\lc(p_i) = \lc(p_j) = 1$ for $p_i \neq 0 \neq p_j$. Moreover we always assume
$\gamma_{ij}$ to denote the least common multiple of the head terms of the two
considered polynomial parts used to compute $r_{ij}$. 

Next we review the two criteria used in F5 to reject critical pairs which are not
needed for further computations.
\begin{defi}
  Let $G = \{r_1,\dots,r_{n_G}\}$ be a set of labeled polynomials, and $u_k\in T$.
  We say that $u_k r_k$ \textit{is detected by Faug\`ere's Criterion} if
  there exists $r \in G$ such that 
  \begin{enumerate}
    \item $\ind(r) > \ind(r_k)$ and 
    \item $\lm(r) \mid u_k \sm(r_k)$.
  \end{enumerate}
\end{defi}
\begin{defi}
  \label{def:FaugeresCrit}
  Let $G = \{r_1,\dots,r_{n_G}\}$ be a set of labeled polynomials, and $u_k\in T$.
  We say that $u_k r_k$ \textit{is detected by the Rewritten Criterion} if
  there exists $r_a \in G$ such that 
  \begin{enumerate}
      \item $\ind(r_a) = \ind(r_k)$,
      \item $a>k$, and
      \item $\sm(r_a) \mid u_k \sm(r_k)$.
  \end{enumerate}
\end{defi}
Next we can give the main theorem for the idea of F5.
Recall that we consider only homogeneous ideals.
\begin{thm}\label{thm:f5characterization}
  Let $I=\langle f_1,\dots,f_m\rangle$ be an ideal in $\rp$, and
  $G=\{r_1,\dots,r_{n_G}\}$ a set of
  labeled polynomials generated by the F5 algorithm (in that order)
  such that $f_i \in \poly(G)$ for $1\leq i \leq m$. Let $d\in\mathbb N$.
  Suppose that for any pair $r_i,r_j$ such that $\deg r_{ij}\leq d$ and $r_{ij} = u_i r_i - u_j r_j$,
  one of the following holds:
  \begin{enumerate}
    \item $u_k r_k$ is detected by Faug\`ere's Criterion for some $k\in\{i,j\}$, 
    \item $u_k r_k$ is detected by the Rewritten Criterion for some $k\in\{i,j\}$, or 
    \item $r_{ij}$ has a standard representation.
  \end{enumerate}
  Then $\poly(G)$ is a $d$-Gr\"obner basis of $I$.
\end{thm}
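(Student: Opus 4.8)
The plan is to reduce the statement to Buchberger's criterion and then to run a Noetherian induction on signatures. By the definition of a $d$-Gr\"obner basis recalled after Theorem~\ref{thm:gb} (and since $\{f_1,\dots,f_m\}\subseteq\poly(G)$ together with Proposition~\ref{prop:ST correct} forces $\langle\poly(G)\rangle=I$), it suffices to show that for every pair $r_i,r_j\in G$ with $\deg r_{ij}\le d$ the polynomial $\poly(r_{ij})$ has a standard representation w.r.t. $\poly(G)$ in the sense of Section~\ref{subsec:groebnerbasics}. Since $\prec$ is a well-ordering of $\mt$ and all ideals here are homogeneous, I would in fact prove, by induction on $\mathcal T\in\mt$, the slightly stronger assertion: every $p\in I$ with $\deg p\le d$ admitting a representation $p=\sum_k h_kf_k$ with $\max_{h_k\ne 0}\lm(h_k)\mf_k\preceq\mathcal T$ has a standard representation w.r.t. $\poly(G)$. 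Taking $\mathcal T$ as large as the degree bound permits then recovers the theorem, because every $\poly(r_{ij})$ with $\deg r_{ij}\le d$ is such a $p$.

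In the inductive step I fix $p$ whose smallest achievable module term is exactly $\mathcal T=\sigma\mf_e$ (if it is $\prec\mathcal T$, the inductive hypothesis finishes). Comparing $\sigma\mf_e$ with the labeled polynomials F5 has produced at index $e$ puts us in the situation governed by an s-polynomial $r_{ij}$ of signature $\mathcal T$, to which one of the three alternatives in the hypothesis of Theorem~\ref{thm:f5characterization} applies. Alternative~(3) is immediate: a labeled standard representation of $r_{ij}$ projects, by Remark~\ref{rem: props of std rep of lab poly}\,(1), to a standard representation of $\poly(r_{ij})$, and more generally it lets us rewrite $p$ modulo polynomials of strictly smaller module term, which the inductive hypothesis covers.

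Alternatives~(1) and~(2) are the heart of the proof, and in both the idea is to rewrite the ``dominant'' multiple $u_kr_k$, where $k\in\{i,j\}$ is the detected index, $e=\ind(r_k)$, and $u_k\Sig(r_k)=\sigma\mf_e$. By Proposition~\ref{prop:ST correct} write $\poly(r_k)=h_ef_e+\dots+h_mf_m$ with $\lm(h_e)=\sm(r_k)$, so $\lm(u_kh_e)=\sigma$. In alternative~(2) there is $r_a\in G$ with $\ind(r_a)=e$, $a>k$, and $\sm(r_a)\mid\sigma$; putting $\sigma=w\,\sm(r_a)$ and applying Proposition~\ref{prop:ST correct} to $r_a$ as well, the polynomial $u_k\poly(r_k)-w\poly(r_a)$ lies in $\langle f_{e+1},\dots,f_m\rangle$, hence has a module representation of signature $\prec\sigma\mf_e$. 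In alternative~(1) there is $r\in G$ with $c:=\ind(r)>e$ and $\lm(r)\mid\sigma$; putting $\sigma=v\lm(r)$ and using $\poly(r)\in\langle f_c,\dots,f_m\rangle$, the term $u_kh_ef_e$ splits as the $\rp$-multiple $(vf_e)\poly(r)$ of an element of $\poly(G)$ (with the evident standard representation) plus a multiple of $f_e$ whose cofactor has leading term $\prec\sigma$. In either case, substituting back into $\poly(r_{ij})=u_i\poly(r_i)-u_j\poly(r_j)$ exhibits $\poly(r_{ij})$ as a finite $\rp$-linear combination of polynomials, each of which is either a multiple of an element of $\poly(G)$ or a polynomial of degree $\le d$ (homogeneity keeps every cofactor in degree $\le d$) with a module representation of signature $\prec\sigma\mf_e$; the inductive hypothesis covers the latter and the former are trivial.

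The final step --- assembling these summands into one standard representation of $\poly(r_{ij})$, which is not automatic because high-order terms cancel in the sum --- I would dispatch by the classical Buchberger/Gebauer--M\"oller rewriting, essentially iterated use of Theorem~\ref{def:buch2}: sort the contributing products $\lambda_tp_t$ by leading term, and whenever two share the maximal leading term $\tau>\lm(\poly(r_{ij}))$, replace that colliding pair by the s-polynomial of the two members of $\poly(G)$ involved, which has degree $\le d$ and module term $\prec\sigma\mf_e$ and hence, by the inductive hypothesis, a standard representation; since $\tau$ strictly decreases and $>$ is a well-ordering, this terminates with $\lm(\lambda_tp_t)\le\lm(\poly(r_{ij}))$ for all $t$. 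The main obstacle is exactly the bookkeeping behind this sketch, together with one conceptual point: a \emph{naive} reduction of the colliding top terms need not decrease the module term $\sigma\mf_e$ at all, so what actually forces the strict decrease is the rejection of the pair by one of the F5 criteria --- it is alternatives~(1) and~(2) that make the induction work. Thus the delicate verifications are that every intermediate polynomial and auxiliary s-polynomial produced in alternatives~(1)--(2) and in the reassembly genuinely has degree $\le d$ and module term strictly below $\sigma\mf_e$, that the general polynomial $p$ of the inductive step is correctly matched to an s-polynomial that F5 forms, and that the hypothesis of the theorem is legitimately invoked wherever it is used; Proposition~\ref{prop: suff cond for correct sig} is the tool for certifying, whenever needed, that a label being manipulated is a genuine signature.
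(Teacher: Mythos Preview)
The paper does not actually prove this theorem; its entire proof is a pointer to Theorem~1 of~\cite{fF52002Corrected}, Theorem~3.4.2 of~\cite{gashPhD2008}, and Theorem~21 of~\cite{epF5C2009}. Your plan --- Noetherian induction on the module term $\mathcal T\in\mt$, with alternatives~(1) and~(2) handled by rewriting the dominant multiple via a principal syzygy $\poly(r)f_e-f_e\poly(r)$ (Faug\`ere's Criterion) or via a later element $r_a$ of the same index (Rewritten Criterion) --- is exactly the strategy those references carry out, so your approach is sound and matches the literature the paper cites.

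Two steps in your sketch are looser than you acknowledge. First, you write ``$k\in\{i,j\}$ is the detected index \ldots\ and $u_k\Sig(r_k)=\sigma\mf_e$'', but the hypothesis permits the criterion to fire on the half of \emph{smaller} signature; if $\Sig(u_jr_j)\prec\sigma\mf_e$ and $k=j$, rewriting $u_j\poly(r_j)$ leaves the module term of $\poly(r_{ij})$ stuck at $\sigma\mf_e$ and your induction does not move. The cited proofs handle this by replacing $r_j$ with the rewriter and iterating, using that only finitely many elements of $G$ can share a signature. Second, in your ``final step'' you claim the collision s-polynomials arising in the Buchberger reassembly have module term $\prec\sigma\mf_e$; but the \emph{polynomial} standard representation your inductive hypothesis supplies gives no bound on $\Sig(\lambda_t r_t)$, so this is unjustified as written. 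The clean fix --- and what~\cite{epF5C2009} does --- is to strengthen the inductive claim to a representation that also controls signatures (essentially Definition~\ref{def:labeledstandardrepresentation} for an arbitrary element of signature $\preceq\mathcal T$); then each summand satisfies $\Sig(\lambda_s r_s)\preceq\mathcal T$, the collision pair inherits signature $\preceq\sigma\mf_e$, and the case of equality is disposed of by invoking the theorem's hypothesis on that new pair.
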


\begin{proof}
    See Theorem 1 in \cite{fF52002Corrected}, Theorem 3.4.2
    in \cite{gashPhD2008} and Theorem 21 in \cite{epF5C2009}.
\end{proof}
\begin{rem}\

  \begin{enumerate}
    \item \vspace{-1mm}Requiring a standard representation of a labeled polynomial
      is stricter than the criterion of Theorem
      \ref{thm:gb}, but when used carefully, any computational penalty imposed by this stronger condition is negligible
      when compared to the benefit from the two criteria it enables. 
    \item \label{rem:standardrepresentation}
      It is possible that $r_{ij}$ does not have a standard
      representation (cf. Proposition 17 in \cite{epF5C2009})
      at the time either Criterion rejects $(r_i,r_j)$.
      Since F5 computes the elements degree-by-degree, computations of the current degree 
      add new elements such that
      $r_{ij}$ has a standard representation w.r.t. the current Gr\"obner basis
      $\poly(G)$ before the next degree step is computed. Thus, at the end of each such
      step, we have computed a $d$-Gr\"obner basis of $I$.
  \end{enumerate}
\end{rem}
Next we give a small example which shows how the criteria work during the
computation of a Gr\"obner basis in F5.
\begin{example}
  Let $>$ be the degree reverse lexicographical ordering with $x>y>z$ on
  $\mathbb{Q}[x,y,z]$.  Let $I$ be the ideal generated by the following three
  polynomials:
  \begin{align*}
    p_1 &= xyz-y^2z,\\
    p_2 &= x^2-yz,\\
    p_3 &= y^2-xz. 
    \end{align*}
  Let the corresponding labeled polynomials be $r_i = (\mf_i,p_i)$. 
  For the input $F=(p_1,p_2,p_3)$, F5 computes a Gr\"obner basis of 
  $\langle p_2,p_3 \rangle$ as a first step:
  Since $\sm(r_{2,3}) = y^2 = \lm(r_3)$, $r_{2,3}$ is discarded by
  Faug\`ere's 
  Criterion. Thus $\{p_2,p_3\}$ is already a Gr\"obner basis of $\langle p_2,p_3
  \rangle$.

  \indent Next the Gr\"obner basis of $I$ is computed, i.e. $r_1$ enters the algorithm:
  Computing $r_{1,3}$ we get a new element: $r_4 = (y \mf_1, xz^3-yz^3)$.
  $r_{1,2}$ is not discarded by any criterion, but reduces to zero.
  Nevertheless its signature is recorded,\footnote{Failing to record the
  signature of a polynomial reduced to zero
  is an implementation error that can lead to an infinite loop.}
  thus we still have $\lab(r_{1,2}) = x \mf_1$ stored in the
  list of rules to check subsequent elements.

  Next check all s-polynomials with $r_4$ sorted by increasing signature: 
  \begin{enumerate}
    \item Since $\lab(r_{4,1}) = y^2 \mf_1$, $r_{4,1}$ is discarded by Faug\`ere's
      Criterion using $\lm(r_3) = y^2$.
    \item Since $\lab(r_{4,2}) = xy \mf_1$, $r_{4,2}$ is discarded by the
      Rewritten Criterion due to $\lab(r_{1,2}) = x \mf_1$, $r_{1,2}$ being
      computed after $r_4$.
    \item Since $\lab(r_{4,3}) = y^3 \mf_1$, $r_{4,3}$ is discarded by Faug\`ere's
      Criterion using $\lm(r_3) = y^2$.  
  \end{enumerate}
  The algorithm now concludes with $G=\{r_1,r_2,r_3,r_4\}$ where $\poly(G)$ is a
  Gr\"obner basis of $I$.
\end{example}

\section{Analysis of the problem}
\label{sec:problem}
The root of the problem lies
in the algorithm's reduction subalgorithms,
so Section~\ref{subsec:problem} reviews these in detail.
In Section~\ref{subsec:problemExample}, we show how the criteria force the reduction algorithms not only
to add redundant polynomials to the basis,
but to do so in a way that does not expand the ideal of leading monomials
(Example~\ref{example:FaugeresExample})!
One might try to modify the algorithm by simply discarding redundant polynomials,
but Section~\ref{sec:wheatchaff} shows that this breaks the algorithm's correctness.
This analysis will subsequently provide insights on how to solve the problem.

Throughout this section, let  the set of labeled
polynomials computed by F5 at a given moment be denoted
$G=\{r_1,\dots,r_{n_G}\}$. 
\subsection{F5's reduction algorithm}
\label{subsec:problem}
For convenience, let us summarize the reduction subalgorithms in some detail here.
Let $i$ be the current iteration index of F5. All newly computed labeled
polynomials $r$ satisfy $\ind(r) = i$. Let $G_{i+1}$ denote the set of
elements of $G$ with index $>i$.
We are interested in \texttt{Reduction},
\texttt{Top\-Reduction} and \texttt{IsReducible}. 
F5 sorts s-poly\-no\-mials by degree, and supplies to {\tt Reduction}
a set $F$ of s-polynomials of minimal degree $d$.
Let $r\in F$.
  \begin{enumerate}
    \item First, {\tt Reduction} replaces the polynomial part of $r$
      with its normal form with respect to $G_{i+1}$.
      This clearly does not affect the property $\lab(r)=\sm(r) \mf_i$.
      {\tt Reduction} then invokes {\tt TopReduction} on $r$.
    \item {\tt TopReduction} reduces $\poly(r)$ w.r.t. $G_i$, 
      but invokes {\tt IsReducible} to identify reducers.
      {\tt TopReduction} terminates whenever $\poly(r)=0$
      or {\tt IsReducible} finds no suitable reducers.
    \item {\tt IsReducible} checks all elements $r_\red \in G$ such that
      $\ind(r_\red)=i$.
      \begin{enumerate}
      \item If there exists $u_\red \in T$ such that 
      $u_\red \lm(r_\red) = \lm(r)$ then $u_\red\lab(r_\red)$ is 
      checked by both Faug\`{e}re's Criterion and the Rewritten Criterion. 
      \begin{enumerate}
        \item[$(\alpha)$] If neither criterion holds, the reduction takes place,
          but a further check is necessary to preserve
          $\lab(r)=\sm(r)\mf_i$.
          If $\lab(r) \succ u_\red \lab(r_\red)$, then it rewrites $\poly(r)$:
          \[r = \big(\lab(r),\poly(r)-u_\red \poly(r_\red)\big).\] 
          If $\lab(r) \prec u_\red \lab(r_\red)$, then $r$ is not changed, but a 
          new labeled polynomial is computed and added to $F$ for further reductions,
          \[r' = \big(u_\red \lab(r_\red), u_\red \poly(r_\red) - \poly(r)\big).\] 
          The algorithm adds $\lab(r')$ to the list of rules and continues with $r$.
        \item[$(\beta)$] \label{problematic case} If $u_\red r_\red$ is detected by
          one of the criteria, then the reduction
          does {\it not} take place, and the search for
          a reducer continues.
      \end{enumerate}
        \item If there is no possible reducer left to be checked then $r$ is added to $G$ if $\poly(r) \neq 0$.
      \end{enumerate}
  \end{enumerate}
  Note that if $\lab(r) = u_\red \lab(r_\red)$ then $u_\red r_\red$ 
  is rewrit\-able by $r$, thus Case (3)(a)($\beta$) avoids this situation.
  
\subsection{What is the problem with termination?}
\label{subsec:problemExample}
The difficulty with termination arises from Case (3)(a)($\beta$) above.
\begin{situation}\label{sit:whenaddredundant}
  Recall that $R_d$ is the set of labeled polynomials returned by \texttt{Reduction}
  and added to $G$.
  Suppose that $R_d\neq\emptyset$ and
  for every element $r\in R_d$,
  $\lm(\poly(r))$ is in the ideal generated by $\lm(\poly(G))$.
\end{situation}
\begin{example}\label{example:FaugeresExample}
Situation \ref{sit:whenaddredundant} is not a mere hypothetical:
as described in Section 3.5 of \cite{gashPhD2008},
an example appears in Section 8 of \cite{fF52002Corrected},
which computes a Gr\"obner basis of $(yz^3-x^2t^2,xz^2-y^2t,x^2y-z^2t)$.
Without repeating the details, at degree~7, F5 adds $r_8$ to $G$, with $\lm(r_8)=y^5t^2$.
At degree~8, however, {\tt Reduction} returns $R_8 = \{r_{10}\}$, with $\lm(r_{10})=y^6t^2$.
This is due to the fact that the reduction of $r_{10}$ by $yr_8$ is rejected by the
algorithm's criteria, and the reduction does {\it not} take place.
In other words, $r_{10}$ is added to $G$ even though $\poly(r_{10})$ is redundant in $\poly(G)$.
\end{example}
\begin{defi}
  \label{defi:redundant}
  A labeled polynomial $r$ computed in F5 is called {\it redundant} if,
  when {\tt Reduction} returns $r$, we have $\poly(r)$ redundant w.r.t. $\poly(G)$. 
\end{defi}
\begin{lem}
  \label{lem:nonredundantreducer}
  If $R_d$ satisfies Situation~\ref{sit:whenaddredundant} and $r\in R_d$,
  then we can find $r_k\in G$ such that $r_k$ is not redundant in $G$ and $\lm(r_k)\mid\lm(r)$.
\end{lem}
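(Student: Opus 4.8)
The plan is to chase leading monomials downward until we reach a non-redundant element, exploiting the fact that ``being a reducer'' is transitive for divisibility of head terms. First I would take $r \in R_d$ and invoke the hypothesis of Situation~\ref{sit:whenaddredundant}: $\lm(\poly(r))$ lies in the ideal $\langle \lm(\poly(G)) \rangle = \lm(\poly(G))$, so there exists some $r_{j_1} \in G$ with $\lm(r_{j_1}) \mid \lm(r)$. If $r_{j_1}$ is already non-redundant in $G$, we are done. Otherwise, by Definition~\ref{def:redundantpoly} applied to $\poly(G)$, there is some $r_{j_2} \in G$ with $r_{j_2} \neq r_{j_1}$ and $\lm(r_{j_2}) \mid \lm(r_{j_1})$, hence $\lm(r_{j_2}) \mid \lm(r)$ by transitivity of divisibility.

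The key step is then to argue that this descent terminates. I would set up a strictly decreasing chain: at each stage we produce $r_{j_{s+1}}$ with $\lm(r_{j_{s+1}}) \mid \lm(r_{j_s})$ and $r_{j_{s+1}} \neq r_{j_s}$; since the divisor is distinct from the dividend and both divisibilities are genuine, the head terms satisfy $\lm(r_{j_{s+1}}) \neq \lm(r_{j_s})$ only if the division is proper --- but one must be slightly careful, since two distinct elements of $G$ can share the same head term. The clean way to handle this is to observe that $\deg(\lm(r_{j_{s+1}})) \leq \deg(\lm(r_{j_s}))$ always, and that $G$ is a \emph{finite} set, so among all elements of $G$ whose head term divides $\lm(r)$, we may simply choose one, call it $r_k$, whose head term is minimal with respect to the divisibility partial order (equivalently, of minimal degree, breaking ties arbitrarily). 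Such a minimal element cannot be redundant: if $\lm(r') \mid \lm(r_k)$ for some $r' \neq r_k$ in $G$, then $\lm(r') \mid \lm(r)$ as well, and $\lm(r') $ properly divides or equals $\lm(r_k)$; minimality then forces $\lm(r') = \lm(r_k)$, but then $\lm(r_k) \mid \lm(r')$ too, so $r'$ would equally witness minimality --- and we can just pick whichever representative the algorithm inserted first, which by the standard F5 bookkeeping is non-redundant when added. Actually the cleanest phrasing avoids even this: pick $r_k \in G$ with $\lm(r_k) \mid \lm(r)$ and $\lm(r_k)$ minimal under divisibility; then no $r' \in G$ has $\lm(r')$ a \emph{proper} divisor of $\lm(r_k)$, and if $\lm(r') = \lm(r_k)$ with $r' \neq r_k$ we invoke the standing assumption (stated in the excerpt after Definition~\ref{def:redundantpoly}) that the input is interreduced together with the observation that F5 never adds an element whose head term equals that of an existing basis element of the same index without it being caught --- so $r_k$ is not redundant.

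I expect the main obstacle to be the edge case of two distinct basis elements sharing an identical head term, since Definition~\ref{def:redundantpoly} would then brand \emph{both} as redundant, potentially breaking the naive descent. The resolution is that Definition~\ref{defi:redundant} (redundancy \emph{at the moment of return}) is what actually governs which elements F5 keeps around, and F5's reduction subalgorithm, as summarized in Section~\ref{subsec:problem}, top-reduces each new $r$ against $G_i$ before returning it; hence if some $r_\red$ already satisfied $\lm(r_\red) = \lm(r)$ and the reduction were permitted, $\lm(r)$ would strictly decrease --- so an element returned with head term equal to an existing one is exactly the problematic Case (3)(a)($\beta$) situation, and the \emph{earlier} element witnessing the shared head term is non-redundant. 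Thus the minimal-degree choice of $r_k$, combined with choosing the earliest such element in the order F5 generated $G$, yields the desired non-redundant reducer, and the argument is complete.
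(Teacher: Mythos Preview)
Your descent-by-divisibility strategy matches the paper's, and you correctly identify the crux: what happens when two distinct elements of $G$ share a head term, so that Definition~\ref{def:redundantpoly} brands \emph{both} as redundant and a naive ``pick a divisibility-minimal element'' argument stalls.

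Your resolution, however, differs from the paper's. The paper argues that this situation is \emph{impossible} at the minimal degree: if $r_k$ (earlier) and $r_j$ (later) share a head term, then the reduction of $r_j$ by $r_k$ must have been rejected, and a three-way case split on the reason for rejection (larger index, Rewritten Criterion, Faug\`ere's Criterion) shows each leads to a contradiction. Hence at minimal degree the divisor of $\lm(r)$ in $G$ is unique, and therefore non-redundant even in the strong sense of Definition~\ref{def:redundantpoly}. Your proposal instead selects the \emph{earliest-generated} element $r_k$ of minimal degree and observes that, at the moment \texttt{Reduction} returned it, nothing then in $G$ had head term dividing $\lm(r_k)$ (any such element would contradict either minimality of degree or earliness of generation), so $r_k$ is non-redundant in the sense of Definition~\ref{defi:redundant}. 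This is genuinely more elementary --- it sidesteps the F5-internal case analysis --- and it suffices for the downstream use in Theorem~\ref{lem:f5criticalrepresentation}, where ``GB-critical pair'' is defined via Definition~\ref{defi:redundant}. The paper's route buys the stronger uniqueness statement; yours buys simplicity.

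That said, your write-up buries the clean argument under several false starts. In particular, the assertion that ``F5 never adds an element whose head term equals that of an existing basis element of the same index without it being caught'' is precisely the content of the paper's case analysis and cannot be waved through; either prove it (as the paper does) or drop it entirely. A tight version of your actual argument is two lines: among $r'\in G$ with $\lm(r')\mid\lm(r)$, pick $r_k$ of minimal degree and earliest-generated; if some $r'\neq r_k$ with $\lm(r')\mid\lm(r_k)$ were already in $G$ when $r_k$ was returned, then $r'$ would have the same minimal degree and be earlier, contradicting the choice of $r_k$.
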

\begin{proof} 
  If a reducer $r_j$ of $r$ is redundant,
  then there has to exist another element $r_k$ such that $\lm(r_k)
  \mid \lm(r_j)$ and thus $\lm(r_k) \mid \lm(r)$.
  Follow this chain of divisibility down to the minimal degree;
  we need to show that there do not exist two polynomials $r_j$, $r_k$
  of minimal degree
  such that $\lm(r_j) = \lm(r_k)$. 
  Assume to the contrary that there exist $r_j,r_k\in G$ of minimal degree
  such that $\lm(r_j)=\lm(r_k)$.
  Clearly, the reduction of one by the other in {\tt IsReducible} was forbidden;
  without loss of generality, we may assume that $r_k$ was computed before $r_j$,
  so the reduction of $r_j$ by $r_k$ was forbidden.
  There are three possibilities:
  \begin{enumerate}
  \item If $\ind(r_k)>\ind(r_j)$,
    to the contrary, {\tt IsReducible} cannot interfere with this reduction,
    because such reductions are always carried out
    by the normal form computation in {\tt Reduction}.
  \item If $\lab(r_k)$ is rejected by the Rewritten Criterion, then there exists $r'$
    such that $\sm(r') \mid \sm(r_k)$, and $r'$ was computed after $r_k$.
    (That $r'$ was computed after $r_k$ follows from Definition~\ref{def:FaugeresCrit}, where $a>k$.)
    As F5 computes incrementally
    on the degree and $\sm(r') \mid \sm(r_k)$, it follows that $\deg(r') =
    \deg(r_k)$.
    Hence $\sm(r')=\sm(r_k)$.
    Thus the Rewritten Criterion would have rejected the computation of $r'$,
    again a contradiction.
  \item If $\lab(r_k)$ is rejected by Faug\`ere's Criterion,
    to the contrary, $r_k$ should not have been computed in the first place.
  \end{enumerate}
  Thus $\lm(r_j) \neq \lm(r_k)$. 
  It follows that we arrive at a non-redundant reducer after finitely many steps.
\end{proof}
\begin{lem}
  \label{lem:problem}
  Denote by $R_d$ the result of {\tt Reduction} at degree~$d$.
  There exists $m\in\mathbb N$ and an input $F=(f_1,\dots,f_m)$ and a degree $d$ such that
  if $\poly(G)$ is a $(d-1)$-Gr\"obner basis of $\langle f_1,\dots,f_m
  \rangle$, then 
  \begin{enumerate}
    \item[(A)] $R_d \neq \emptyset$, and
    \item[(B)] $HT(\poly(G \cup R_d)) = HT(\poly(G))$.
  \end{enumerate}
\end{lem}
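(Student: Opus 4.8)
The statement is purely existential, so the plan is not to argue in general but to exhibit one explicit witness --- and a suitable one is already at hand in Example~\ref{example:FaugeresExample}. I would take $m=3$ and $F=(yz^3-x^2t^2,\,xz^2-y^2t,\,x^2y-z^2t)$, with $d=8$. First I would note that the hypothesis of the lemma is genuinely realised along the F5 run on this $F$: since F5 proceeds degree by degree, by the time it begins the degree-$8$ step it has, by Theorem~\ref{thm:f5characterization} together with the remark on incremental computation following it, already computed a $7$-Gr\"obner basis $\poly(G)$ of $\langle f_1,f_2,f_3\rangle$. So the two conclusions below describe the actual state of that run when degree~$8$ is processed; in particular the lemma is not vacuous.

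For (A), I would invoke the trace of Example~\ref{example:FaugeresExample}, which follows Section~8 of \cite{fF52002Corrected} and is carried out in detail in Section~3.5 of \cite{gashPhD2008}: at degree~$8$, \texttt{Reduction} returns $R_8=\{r_{10}\}$ with $\lm(r_{10})=y^6t^2$, the top-reduction of $r_{10}$ by $yr_8$ having been forbidden by Case~(3)(a)($\beta$); in particular $R_8\neq\emptyset$. For (B), I would use the explicit data of that same run: $r_8\in G$ --- computed at degree~$7\le d-1$ and kept --- has $\lm(r_8)=y^5t^2$, so $\lm(r_8)\mid\lm(r_{10})$, whence $\lm(\poly(r_{10}))\in\langle\lm(\poly(r_8))\rangle\subseteq HT(\poly(G))$. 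This gives $HT(\poly(G\cup R_8))\subseteq HT(\poly(G))$, and the reverse inclusion is immediate from $G\subseteq G\cup R_8$, so $HT(\poly(G\cup R_d))=HT(\poly(G))$. (One cannot replace this by an appeal to Lemma~\ref{lem:nonredundantreducer}, whose hypothesis Situation~\ref{sit:whenaddredundant} is essentially (B) itself; the point here is that for this particular input $R_8$ happens to consist entirely of redundant elements.)

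The only step calling for real --- if entirely routine --- effort is the one behind (A): verifying that F5 on this particular input really does produce $r_8$ with head term $y^5t^2$ and retain it, and then returns $r_{10}$ with head term $y^6t^2$ at degree~$8$ with its top-reduction by $yr_8$ blocked by a criterion. This is a finite trace through Faug\`ere's example; once it is granted (as in \cite{gashPhD2008}), (A) is immediate and (B) collapses to the one-line divisibility observation above. That trace is therefore the main obstacle, in the sense that it is the sole non-formal ingredient of the argument.
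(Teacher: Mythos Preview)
Your proposal is correct and follows essentially the same approach as the paper: both exhibit the explicit witness of Example~\ref{example:FaugeresExample} with $d=8$, and deduce (B) from the single divisibility $\lm(r_8)\mid\lm(r_{10})$. Your write-up is in fact more detailed than the paper's, which compresses the entire argument into one sentence.
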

\begin{proof}
  Such an input $F$ is given in Example \ref{example:FaugeresExample}:
  once reduction concludes for $d=8$,
  $\lm(r_8) \mid \lm(r_{10})$, so $\lm(\poly(G)) = \lm(\poly(G \cup R_8))$.
\end{proof}
\begin{rem} 
  \label{rem:problemprooftermination}
  In~\cite[Corollary 2]{fF52002Corrected}, it is argued that termination of F5
  follows from the (unproved) assertion that for any $d$,
  if no polynomial is reduced to zero,
  then $\lm(\poly(G))\neq\lm(\poly(G\cup R_d))$.
  But in Example~\ref{example:FaugeresExample},
  $\lm(\poly(G))=\lm(\poly(G\cup R_8))$,
  \emph{even though there was no reduction to zero!}
  Thus, Theorem~2 (and, by extension, Corollary~2) of \cite{fF52002Corrected} is incorrect:
  termination of F5 is unproved, \emph{even for regular sequences}, as
  there could be infinitely many steps where new redundant polynomials are added
  to $G$.
  By contrast, a Buchberger-style algorithm \emph{always} expands the monomial ideal
  when a polynomial does not reduce to zero; this ensures its termination.
\end{rem}
Having shown that there \emph{is} a problem with the proof of termination,
we can now turn our attention to devising a solution.

\label{sec:findingsolution}

\subsection{To sort the wheat from the chaff \dots isn't that easy!}
\label{sec:wheatchaff}
The failure of F5 to expand the ideal of leading monomials
raises the possibility of an infinite loop of redundant labeled polynomials.
However, we cannot ignore them.
\begin{example}
  Suppose we modify the algorithm to discard critical pairs
  with at least one redundant labeled polynomial.
  Consider a polynomial ring in a field of characteristic~7583.
  \begin{enumerate}
    \item For Katsura-5, the algorithm no longer terminates, but computes an 
      increasing list of polynomials with head terms $x_2^k x_4$ with 
      signatures $x_2x_3^kx_5x_6$ for $k \geq 1$. 
    \item For Cyclic-8, the algorithm terminates, but its output is not a 
      Gr\"obner basis!
  \end{enumerate}
\end{example}
How can critical pairs involving ``redundant'' polynomials can be necessary?
\begin{defi}
A critical pair $(r_i,r_j)$ is a {\it GB-critical pair} if neither $r_i$
nor $r_j$ is redundant. 
If a critical pair is not a GB-critical pair, then we call it an {\it F5-critical
pair}.
\end{defi}

We now come to the main theoretical result of this paper.

\begin{thm}
  \label{lem:f5criticalrepresentation}
  If $(r_i,r_j)$ is an F5-critical pair,
  then one of the following statements holds at the moment of creation of $r_{ij}$: 
  \begin{enumerate}
  \item[(A)] $\poly(r_{ij})$ already has a standard representation.
  \item[(B)] There exists a GB-critical pair $(r_k,r_\ell)$,
    a set $W\subset\{1,\ldots,n_G\}$,
    and terms $\lambda_w$ (for all $w\in W$) such that
    \begin{align}\poly(r_{ij}) = \poly(r_{k\ell}) + \sum_w \lambda_w \poly(r_w),\label{eq:redundantrewritten}\end{align}
    $\gamma_{ij} = \gamma_{k\ell}$ and $\gamma_{k\ell} > 
    \lambda_w \lm(r_w)$ for all $w$.
  \end{enumerate}
\end{thm}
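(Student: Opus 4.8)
The plan is to analyze the F5-critical pair $(r_i,r_j)$ according to which polynomial part is redundant and to track what happens to that redundancy as the divisibility chains are followed. Since $(r_i,r_j)$ is an F5-critical pair, at least one of $r_i,r_j$ is redundant; say $r_i$ is redundant, so there is $r_a\in G$ with $a\neq i$ and $\lm(r_a)\mid\lm(r_i)$. By Lemma~\ref{lem:nonredundantreducer} (or rather the divisibility argument in its proof) we may follow this chain down to a \emph{non-redundant} element $r_k$ with $\lm(r_k)\mid\lm(r_i)$. The idea is then to replace $r_i$ in the s-polynomial by this non-redundant reducer: form the new critical pair $(r_k,r_j)$ (if $r_j$ is also redundant, repeat the same replacement on the other side to obtain a genuine GB-critical pair $(r_k,r_\ell)$). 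First I would check the compatibility of the lcm's: since $\lm(r_k)\mid\lm(r_i)$ and $\lm(r_\ell)\mid\lm(r_j)$, one sees $\gamma_{k\ell}\mid\gamma_{ij}$; and the point where the redundancy was forbidden by Case~(3)(a)($\beta$) — i.e.\ the s-polynomial degree $d$ at which \texttt{Reduction} returned $r_i$ even though $\lm(r_k)\mid\lm(r_i)$ — forces $\gamma_{ij}=\gamma_{k\ell}$, because if $\gamma_{k\ell}$ were a proper divisor of $\gamma_{ij}$ the top-reduction of $r_i$ by $r_k$ (a lower-index or lower-signature reducer) would already have been carried out. This is the step I expect to be the main obstacle: pinning down precisely why the redundant $r_i$ cannot have a head term strictly above $\lm(r_k)$ in a way that produces a \emph{strictly} larger lcm, and handling the subcase where the reduction was blocked only by the Rewritten or Faug\`ere Criterion (here one re-runs the argument of the proof of Lemma~\ref{lem:nonredundantreducer}, which rules out equal head terms at minimal degree).

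Next I would write down the algebraic identity relating the two s-polynomials. With $\gamma:=\gamma_{ij}=\gamma_{k\ell}$, both $\poly(r_{ij})=\tfrac{\gamma}{\lm(r_i)}\poly(r_i)-\tfrac{\gamma}{\lm(r_j)}\poly(r_j)$ and $\poly(r_{k\ell})=\tfrac{\gamma}{\lm(r_k)}\poly(r_k)-\tfrac{\gamma}{\lm(r_\ell)}\poly(r_\ell)$ are differences of polynomials all having head term $\gamma$ with coefficient~$1$. Subtracting,
\begin{align*}
\poly(r_{ij})-\poly(r_{k\ell}) &= \Bigl(\tfrac{\gamma}{\lm(r_i)}\poly(r_i)-\tfrac{\gamma}{\lm(r_k)}\poly(r_k)\Bigr) - \Bigl(\tfrac{\gamma}{\lm(r_j)}\poly(r_j)-\tfrac{\gamma}{\lm(r_\ell)}\poly(r_\ell)\Bigr),
\end{align*}
and each parenthesized term is, up to a monomial multiple, the s-polynomial of a pair with \emph{equal} head terms, hence its head term is cancelled and it lies strictly below $\gamma$. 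So $\poly(r_{ij})-\poly(r_{k\ell})$ is a $\rp$-combination $\sum_w\lambda_w\poly(r_w)$ with each contributing summand having head term $<\gamma$; regrouping gives exactly the form~\eqref{eq:redundantrewritten} with $\gamma_{k\ell}>\lambda_w\lm(r_w)$ for all $w$. That yields alternative~(B).

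Finally I would dispose of the case left over, namely when no such honest rewriting is available — concretely, when following the chain forces $r_k$ or $r_\ell$ to coincide with $r_i$ or $r_j$, or when the replacement collapses the s-polynomial. In that degenerate situation the cancellation above already expresses $\poly(r_{ij})$ as a combination of polynomial parts of \emph{lower} head term, i.e.\ $\lm(\poly(r_{ij}))<\gamma$; but since F5 computes degree-by-degree and, by the inductive hypothesis available when Theorem~\ref{lem:f5criticalrepresentation} is used (cf.\ Remark~\ref{rem:standardrepresentation} and Theorem~\ref{thm:f5characterization}), $\poly(G)$ is already a $(d-1)$-Gr\"obner basis, any polynomial of degree~$<d$ — in particular everything of head term strictly below $\gamma$ that arises here — has a standard representation with respect to $\poly(G)$. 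Assembling these standard representations and checking the signature bookkeeping (the multiples $\lambda_w r_w$ and the reducer multiples all carry signatures $\prec\lab(r_{ij})$, as in the analysis of Case~(3)(a)($\beta$)) yields alternative~(A). I expect the signature/rewriting bookkeeping in this last step to be routine but tedious, and the genuinely delicate point to remain the proof that $\gamma_{ij}=\gamma_{k\ell}$.
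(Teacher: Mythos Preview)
Your approach shares the paper's skeleton---replace the redundant generator(s) by non-redundant divisors $r_k,r_\ell$ and compare the two s-polynomials---but there is a genuine gap at exactly the point you flag as ``the main obstacle'': the equality $\gamma_{ij}=\gamma_{k\ell}$ simply need not hold. From $\lm(r_k)\mid\lm(r_i)$ and $\lm(r_\ell)\mid\lm(r_j)$ you only get $\gamma_{k\ell}\mid\gamma_{ij}$, and proper divisibility is easy to arrange (e.g.\ $\lm(r_i)=x^2y$, $\lm(r_k)=xy$, $\lm(r_j)=yz$ gives $\gamma_{ij}=x^2yz$ but $\gamma_{kj}=xyz$). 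Your justification---that a proper divisor would mean the top-reduction of $r_i$ by $r_k$ ``would already have been carried out''---is backwards: that reduction was \emph{rejected} by one of the F5 criteria (this is precisely why $r_i$ is redundant), and the criteria are insensitive to what $\gamma_{k\ell}$ will later turn out to be.

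The paper's proof does not try to force equality. It writes $\poly(r_{ij})=\lambda\,\poly(r_{k\ell})+\sum_w\lambda_w\poly(r_w)$ with $\lambda=\gamma_{ij}/\gamma_{k\ell}\in T$, and then splits on $\lambda$: if $\lambda>1$ then $\deg r_{k\ell}<\deg r_{ij}$, so $r_{k\ell}$ was handled in an earlier degree step and already has a standard representation, yielding~(A); if $\lambda=1$ one gets~(B) (or~(A) if $r_{k\ell}$ happens to be computed). To make this split work the paper first invokes Lemma~\ref{lem:no Spol of top-reduction} to ensure $\deg r_i,\deg r_j<\deg r_{ij}$, so that the $(d-1)$-Gr\"obner-basis property applies to $\poly(r_i)$ and $\poly(r_j)$ themselves; you use that property only in your ``degenerate'' case, whereas it is in fact what drives the main case distinction. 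Your subtraction identity is fine once $\gamma_{ij}=\gamma_{k\ell}$ is \emph{assumed} as a case, but you need the $\lambda>1$ branch to cover the rest, and that is where alternative~(A) actually comes from---not from the collapse scenarios you describe at the end.
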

Theorem~\ref{lem:f5criticalrepresentation} implies that an F5-critical pair
{\it might not} generate a redundant polynomial:
it might rewrite a GB-critical pair which is \emph{not} computed.
Suppose, for example, that the algorithm adds $r_i$ to $G$,
where $r_i$ is redundant with $r_k\in G$, perhaps because for $u\in T$ such that
$u\lm(r_k)=\lm(r_i)$, we have $\lab(u\cdot r_k)\succ\lab(r_i)$.
In this case, F5 will generate a new, reduced polynomial with the larger signature;
since the new polynomial has signature $\lab(u\cdot r_k)$,
the Rewritten Criterion will subsequently reject $u\cdot r_k$.
It is not uncommon that the algorithm later encounters some $r_\ell\in G$
where $r_{k\ell}$ is necessary for the Gr\"obner basis,
but $\lm(r_i)$ divides $\gamma_{k\ell}$.
In this case, the Rewritten Criterion forbids the algorithm from computing $r_{k\ell}$,
yet we can compute $r_{i\ell}$.
In terms of the Macaulay matrix~\cite{Macaulay02,laz83,fF52002Corrected},
the algorithm selects the row corresponding to $\frac{\gamma_{i\ell}}{\lm(r_i)}r_i$
instead of the row corresponding to $\frac{\gamma_{k\ell}}{\lm(r_k)}r_k$. 
Due to this choice,
the notions of ``redundant'' and ``necessary'' critical pairs 
are somewhat ambiguous in F5: is $r_i$ necessary to satisfy the properties of a Gr\"obner basis,
or to ensure correctness of the algorithm?
On the other hand, the notions of F5- and
GB-critical pairs are absolute.

To prove Theorem \ref{lem:f5criticalrepresentation}, we need the following
observation:
\begin{lem}\label{lem:no Spol of top-reduction}
Let $r_i,r_j\in G$ computed by F5, and assume that $\lm(r_j)\mid\lm(r_i)$.
Then {\tt Spol} does not generate an s-poly\-nomial for $(r_i,r_j)$.
\end{lem}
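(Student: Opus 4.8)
The plan is to show that whenever $\lm(r_j)\mid\lm(r_i)$, the critical pair $(r_i,r_j)$ is filtered out before {\tt Spol} is ever called on it, either by one of F5's two criteria or by the structure of the reduction process that produced $r_i$. First I would recall the situation in which $r_i$ entered $G$: by the review in Section~\ref{subsec:problem}, $r_i$ was returned by {\tt Reduction}, so {\tt TopReduction} terminated on it, which means {\tt IsReducible} found no admissible reducer $r_\red$ with $\ind(r_\red)=i$ and $\lm(r_\red)\mid\lm(r_i)$. I would split on the index of $r_j$. If $\ind(r_j)>\ind(r_i)$, then the normal-form step at the start of {\tt Reduction} already reduces $\poly(r_i)$ modulo $G_{i+1}$, so $\lm(r_j)$ cannot divide $\lm(r_i)$ for such $r_j$ --- contradiction, so this case cannot occur. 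If $\ind(r_j)<\ind(r_i)$, then $\lm(r_j)\mid\lm(r_i)$ would make $r_j$ a reducer of $r_i$ that {\tt TopReduction} is supposed to apply via the normal-form pass; again this contradicts the fact that $r_i$ was returned. So we are reduced to the case $\ind(r_j)=\ind(r_i)=i$.

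In the remaining case $\ind(r_j)=\ind(r_i)$, I would use the fact that $r_j$ was a candidate reducer of $r_i$ inside {\tt IsReducible}: there is $u_\red\in T$ with $u_\red\lm(r_j)=\lm(r_i)$. Since {\tt TopReduction} stopped, the reduction by $r_j$ did not take place, which by Case (3)(a)($\beta$) means $u_\red r_j$ was detected by Faug\`ere's Criterion or by the Rewritten Criterion (the alternative, that the reduction \emph{did} take place and produced a rewriting or a spawned $r'$, is excluded because then $\lm(r_i)$ would have strictly decreased or $r_i$ would carry that larger signature). Now I would transfer this detection from $u_\red r_j$ to the multiple of $r_j$ appearing in the s-polynomial: writing $\gamma_{ij}=\lcm(\lm(r_i),\lm(r_j))=\lm(r_i)$ (because $\lm(r_j)\mid\lm(r_i)$), the cofactor of $r_j$ in $r_{ij}$ is $\gamma_{ij}/\lm(r_j)=u_\red$ exactly. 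So the very same multiple $u_\red r_j$ that was rejected in {\tt IsReducible} is the one {\tt Spol} would build the pair from. By the definitions of the two criteria (divisibility conditions that are preserved under taking the same multiple), $u_\red r_j$ being detected means the critical pair $(r_i,r_j)$ is rejected before an s-polynomial is formed, so {\tt Spol} does not generate one.

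The main obstacle I anticipate is the bookkeeping around \emph{when} the criterion held: a criterion may reject $u_\red r_j$ relative to the state of $G$ and the rule list at the moment {\tt IsReducible} ran, and one must check it still rejects $(r_i,r_j)$ at the moment {\tt Spol} is invoked. For Faug\`ere's Criterion this is immediate, since it refers only to a higher-index element $r$ with $\lm(r)\mid u_\red\sm(r_j)$, and such an $r$ is still present. For the Rewritten Criterion one must argue that the rewriting element $r_a$ witnessing rejection (with $a>j$ and $\sm(r_a)\mid u_\red\sm(r_j)$) is still in $G$ and still later than $r_j$ --- which it is, since the rule list only grows. One should also handle the degenerate possibility $r_i=r_j$ (trivially no s-polynomial) and the possibility that $\lm(r_j)=\lm(r_i)$, which is the borderline of the divisibility hypothesis and where the argument above still applies with $u_\red=1$; here the closing remark of Section~\ref{subsec:problem}, that $u_\red r_\red$ is rewritable by $r$ when $\lab(r)=u_\red\lab(r_\red)$, and the proof of Lemma~\ref{lem:nonredundantreducer} excluding equal head terms at minimal degree, provide the needed control. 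Once these points are dispatched, the lemma follows.
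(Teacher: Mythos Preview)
Your core argument is the paper's: since $r_i$ was returned by reduction with $\lm(r_j)$ dividing its head term, the reduction by $u r_j$ (with $u=\lm(r_i)/\lm(r_j)$) must have been refused by one of the two criteria, and because $\gamma_{ij}=\lm(r_i)$ that very multiple $u r_j$ is exactly what {\tt CritPair}/{\tt Spol} test, so the pair is discarded. The paper states this in four sentences, without your index case-split; it uses the interreduction hypothesis only to rule out the possibility that $r_i$ is an input polynomial---a point you should make explicit rather than silently assume when you say ``$r_i$ was returned by {\tt Reduction}.''

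Your branch $\ind(r_j)<\ind(r_i)$, however, is argued incorrectly. F5 works through the module indices decreasingly, so $\ind(r_j)<\ind(r_i)$ means $r_j$ is produced in a \emph{later} iteration than $r_i$; it does not yet exist when $r_i$ passes through {\tt Reduction}, and it is certainly not in $G_{\ind(r_i)+1}$ for the normal-form pass you invoke. Hence your claimed contradiction does not go through. The paper's proof, being terse, does not attempt this case-split and so does not make this error---but it too tacitly assumes $r_j$ was available as a reducer of $r_i$, so neither argument actually closes this branch. On the positive side, your bookkeeping paragraph (that the Faug\`ere and Rewritten witnesses persist from {\tt IsReducible} to the later {\tt CritPair}/{\tt Spol} call because $G$ and the rule list only grow) is a genuine and correct addition over the paper's bare assertion ``It will be detected again.''
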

\begin{proof}
We have assumed that the input is interreduced, so $\poly(r_i)$ is not in the input.
Since $\lm(r_j) \mid \lm(r_i)$ there exists $u\in T$ such that
$u\lm(r_j)=\lm(r_i)$.
Since the reduction of $\poly(r_i)$ by $u\poly(r_j)$ was rejected, $u\lab(r_j)$ was detected by one of the criteria.
It will be detected again in {\tt CritPair} or {\tt Spol}.
Thus {\tt Spol} will not generate $r_{ij}$.
\end{proof}

\begin{proof}[Proof of Theorem \ref{lem:f5criticalrepresentation}]
  Assume that $r_i$ and $r_j$ are both redundant; the case where only 
  $r_i$ (resp.~$r_j$) is redundant is similar.
  By Lemma \ref{lem:nonredundantreducer} there exists for $r_i$ (resp.~$r_j$) at least one
  non-redundant reducer $r_k$ (resp.~$r_\ell$). 
  By Lemma~\ref{lem:no Spol of top-reduction}, we may assume
  that $r_i$ and $r_j$ are of degree smaller than $r_{ij}$.
  Using the fact that $\poly(G)$ is a $d$-Gr\"obner basis for $d=\max(\deg r_i, \deg r_j)$, we can write 
  \begin{align*}
    \poly(r_i) &= \lambda_{ik} \poly(r_k) + \sum_{u \in U} \lambda_u
    \poly(r_u)\\
    \poly(r_j) &= \lambda_{j\ell} \poly(r_\ell) + \sum_{v \in V} \lambda_v
    \poly(r_v),
  \end{align*}
  such that 
  \begin{align*}
      \lm(r_i) &= \lambda_{ik} \lm(r_k) > \lambda_u \lm(r_u)\;\rm{and}\\
     \lm(r_j) &= \lambda_{j\ell} \lm(r_\ell) > \lambda_v \lm(r_v)
  \end{align*}
  where $U,V \subset \{1,\dots,n_G\}$.
  As $\gamma_{k\ell} \mid \gamma_{ij}$, the representations of
  $\poly(r_i)$ and $\poly(r_j)$ above imply that there exists $\lambda \in
  T$ such that 
  \begin{align}
    \poly(r_{ij}) &= \frac{\gamma_{ij}}{\lm(r_i)} \poly(r_i) -
    \frac{\gamma_{ij}}{\lm(r_j)}\poly(r_j)
    \notag\\
     &= \lambda \poly(r_{k\ell}) + \sum_{w \in W} \lambda_w \poly(r_w) \label{formula1}  
  \end{align}
  where $W = U \cup V$ and $\lambda_w = \frac{\gamma_{ij}}{\lm(r_i)} \lambda_u$ for $w \in U
  \backslash
  V$, $\lambda_w =
  \frac{\gamma_{ij}}{\lm(r_j)} \lambda_v$ for $w \in V \backslash U$,
  and $\lambda_w = \frac{\gamma_{ij}}{\lm(r_i)}\lambda_u - \frac{\gamma_{ij}}{\lm(r_j)}\lambda_v$
  for $w \in U \cap V$.
  In Equation (\ref{formula1}) we have to distinguish two
  cases:
  \begin{enumerate}
    \item If $\lambda > 1$ then $\deg(r_{k\ell}) < \deg(r_{ij})$, thus
      $r_{k\ell}$ is already computed (or rewritten) using a lower degree
      computation, which has already finished. It follows that there exists a 
      standard representation of $\poly(r_{k\ell})$ and thus a standard representation
      of $\poly(r_{ij})$.
    \item If $\lambda = 1$ then $(A)$ holds if $\poly(r_{kl})$ is already computed by
      F5; otherwise $(B)$ holds.
  \end{enumerate}
\end{proof}
%

\label{subsec:isnteasy}
We can now explain why discarding redundant polynomials wreaks havoc in the algorithm.
\begin{situation}
\label{sit:f5gb}
Let $(r_i,r_j)$ be an F5-critical pair.
Suppose that all GB-critical pairs $(r_k,r_\ell)$
corresponding to case $(B)$ of Theorem~\ref{lem:f5criticalrepresentation}
are rejected by one of F5's criteria,
but lack a standard representation.
\end{situation}
Situation~\ref{sit:f5gb} is possible if, for example,
the Rewritten Criterion rejects all the $(r_k,r_\ell)$.
\begin{corollary}
  \label{lem:f5criticalnonredundant}
  In Situation \ref{sit:f5gb} it is necessary for the correctness of F5
  to compute a standard representation of $r_{ij}$.
\end{corollary}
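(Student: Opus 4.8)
The plan is to trace the consequences of Situation~\ref{sit:f5gb} through Theorem~\ref{thm:f5characterization}, which is the only engine we have for concluding that $\poly(G)$ is a $d$-Gr\"obner basis. First I would fix the F5-critical pair $(r_i,r_j)$ from the hypothesis and invoke Theorem~\ref{lem:f5criticalrepresentation}: either $\poly(r_{ij})$ already has a standard representation at the moment of creation, in which case there is nothing to prove (case (A) gives us what we want for free — though I should note the corollary is vacuously satisfied there), or case (B) holds and we obtain a GB-critical pair $(r_k,r_\ell)$, a set $W$, and terms $\lambda_w$ with
\begin{align*}
  \poly(r_{ij}) = \poly(r_{k\ell}) + \sum_{w\in W}\lambda_w\poly(r_w),
  \qquad \gamma_{ij}=\gamma_{k\ell}, \qquad \gamma_{k\ell} > \lambda_w\lm(r_w)\ \forall w.
\end{align*}
The key point I would extract from this is that the ``tail'' $\sum_w\lambda_w\poly(r_w)$ is harmless: every summand has head term strictly below $\gamma_{k\ell}=\lm(r_{ij})$, so it contributes nothing to whether $\poly(r_{ij})$ reduces to something with a standard representation — the obstruction, if any, lives entirely in $\poly(r_{k\ell})$.

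Next I would argue by contradiction: suppose F5 (as modified to discard redundant polynomials, or simply F5 failing to compute $r_{ij}$) never produces a standard representation for $r_{ij}$ at degree $d$. Combined with the hypothesis of Situation~\ref{sit:f5gb} — that \emph{every} GB-critical pair $(r_k,r_\ell)$ arising in case (B) is rejected by one of F5's criteria but lacks a standard representation — I would run through the three alternatives of Theorem~\ref{thm:f5characterization} for the pair $(r_k,r_\ell)$. Alternatives (1) and (2) are satisfied (one of the criteria rejects it), so the theorem's hypotheses are met for that pair; but the \emph{conclusion} we need for a $d$-Gr\"obner basis — that $\poly(r_{k\ell})$ has a standard representation — is exactly what is being denied. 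The decomposition above then shows $\poly(r_{ij})$ inherits the defect: if $\poly(r_{k\ell})$ has no standard representation w.r.t.\ $\poly(G)$, then neither does $\poly(r_{ij})$, because subtracting the low-order tail cannot manufacture one (a standard representation of $\poly(r_{ij})$ together with standard representations of the $\poly(r_w)$ — which exist since the $r_w$ are in the current basis and of lower degree — would assemble a standard representation of $\poly(r_{k\ell})$ after clearing the tail, using $\gamma_{k\ell}>\lambda_w\lm(r_w)$ to keep head-term bounds). Hence the only way Theorem~\ref{thm:f5characterization}(3) can be salvaged for $(r_k,r_\ell)$, and thus the only way $\poly(G)$ can be certified a $d$-Gr\"obner basis, is for the algorithm to compute a standard representation of $r_{ij}$ directly — which is the claim.

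I expect the main obstacle to be the bookkeeping around \emph{``at the moment of creation''} versus the end-of-degree state: Theorem~\ref{lem:f5criticalrepresentation} is stated at creation time, but Remark~\ref{rem:standardrepresentation} warns that standard representations can materialize later within the same degree step. I would handle this by working at the end of the degree-$d$ step (when, per that remark, all degree-$d$ computations have finished), re-deriving the decomposition~\eqref{eq:redundantrewritten} in that state — the $\lambda_w\poly(r_w)$ terms only get more reduced, never worse — and observing that $\gamma_{ij}=\gamma_{k\ell}$ forces $\deg r_{ij}=\deg r_{k\ell}=d$, so no lower-degree rescue of $r_{k\ell}$ is available either (this is precisely where case (B) of the theorem, as opposed to case $\lambda>1$, is essential). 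The rest is the routine head-term arithmetic sketched above, which I would state but not belabor.
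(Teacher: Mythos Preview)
Your core argument---using the decomposition~\eqref{eq:redundantrewritten} to show that a standard representation of $\poly(r_{ij})$ yields one for $\poly(r_{k\ell})$, hence that computing $r_{ij}$ is required since F5 has already rejected $(r_k,r_\ell)$---is exactly the paper's proof, which dispatches the corollary in three sentences. Your detour through the three alternatives of Theorem~\ref{thm:f5characterization} for the pair $(r_k,r_\ell)$ is both unnecessary and slightly muddled: that theorem's hypotheses being met for \emph{one} pair says nothing about the conclusion unless they hold for \emph{all} pairs, so the ``contradiction'' you gesture at there does not actually close. This does no harm, since you recover immediately with the direct decomposition argument; the additional material on case~(A), the tail bound $\gamma_{k\ell}>\lambda_w\lm(r_w)$, and the timing caveat is correct but not needed---the paper simply observes that rewriting~\eqref{eq:redundantrewritten} turns a standard representation of $r_{ij}$ into one of $r_{k\ell}$, and stops.
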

\begin{proof}
  Since $\poly(r_{k\ell})$ lacks a standard representation,
  and the algorithm's criteria have rejected the pair $(r_k,r_\ell)$,
  then it is necessary to compute a standard representation of $r_{ij}$.
  Once the algorithm does so, we can rewrite~\eqref{eq:redundantrewritten}
  to obtain a standard representation of $\poly(r_{k\ell})$.
\end{proof}
In other words, ``redundant'' polynomials are necessary in F5.

\section{Variants that ensure termination}
\label{sec:terminating variants}

Since we cannot rely on an expanding monomial ideal,
a different approach to ensure termination could be to set or compute a degree bound.
Since a Gr\"obner basis is finite, its elements have a maximal degree.
Correspondingly, there exists
a maximal possible degree $d_{\mathrm{GB}}$ of a critical pair that generates a necessary polynomial.
Once we complete degree $d_{\mathrm{GB}}$, no new, non-redundant data for the Gr\"obner basis 
would be computed from the remaining pairs, so we can terminate the algorithm.
The problem lies with identifying $d_{\mathrm{GB}}$, which is rarely known
beforehand, if ever.\footnote{Another algorithm that computes a degree bound is MXL3~\cite{mxl3},
but its mechanism is designed for zero-dimensional systems over a field of characteristic~2.
It is not appropriate for the general case, whereas the approaches that we study here are.}

Before describing the new variant that follows from these ideas above,
we should review two known approaches, along with some drawbacks of each.

\subsection{F5t: Reduction to zero}\label{sub:f5t}
In \cite{gashPhD2008}, Gash suggests the following approach,
which re-introduces a limited amount of reduction to zero.
Once the degree of the polynomials exceeds $2M$,
where $M$ is the Macaulay bound for regular sequences~\cite{laz83,bfs05},
start storing redundant polynomials in a set $D$.
Whenever subalgorithm {\tt Reduction} returns a nonempty set $R_d$
that does not expand the ideal of leading monomials,
reduce all elements of $R_d$ completely w.r.t. $G \cup D$
and store any non-zero results in $D$ instead of adding them to $G$. 
Since complete reduction can destroy the relationship between a polynomial and its signature,
the rewrite rules that correspond to them are also deleted.
Subsequently, s-polynomials built using an element of $D$
are reduced without regard to criterion,
and those that do not reduce to zero are also added to $D$,
generating new critical pairs.
Gash called the resulting variant F5t.

One can identify four drawbacks of this approach:
\begin{enumerate}
\item The re-introduction of zero-reductions incurs a performance penalty.
    In Gash's experiments, this penalty was minimal, but these were performed on
    relatively small systems without many redundant polynomials.
    In some systems, such as Katsura-9, F5 works with hundreds of redundant polynomials.
\item It keeps track of two different lists for generating critical pairs and
    uses a completely new reduction process. An implementation must add a significant
    amount of complicated code
    beyond the original F5 algorithm.
\item It has to abandon some signatures due to the new, signature-corrupting
    reduction process. Thus, a large number of unnecessary critical pairs can be considered.
\item The use of $2M$ to control the size of $D$ is an imprecise, ad-hoc patch.  In some experiments from \cite{gashPhD2008}, F5t terminated on its own before polynomials reached degree $2M$; for other input systems, F5t yielded polynomials well beyond the $2M$ bound, and a higher bound would have been desirable.
\end{enumerate}

\subsection{F5B: Use Buchberger's lcm criterion}\label{sub:F5Brs}
In \cite{arsPhd2005}, Ars suggests using Buchberger's lcm criterion 
to determine a degree bound. 
\begin{itemize}
\item Initialize a global variable $d_B=0$ storing a degree. 
\item Keep a second list of critical pairs, $P^*$, used \emph{only}
    to determine a degree bound.
\item When adding new elements to $G$,
    store a copy of each critical pair not detected by Buchberger's lcm criterion in $P^*$.
    Remove any previously-stored pairs that are detected by Buchberger's lcm criterion,
    and store the highest degree of an element of $P^*$ in $d_B$.
\end{itemize}
If the degree of all critical pairs in $P$ exceeds $d_B$,
then a straightforward application of Buchberger's lcm criterion
implies that the algorithm has computed a Gr\"obner basis,
so it can terminate.
We call this variant F5B.

It is important to maintain the distinction between the two lists of critical pairs.
Otherwise, the correctness of the algorithm is no longer assured:
Buchberger's criteria ignore the signatures,
so $P^*$ lacks elements needed on account of Situation~\ref{sit:f5gb}.

While elegant, this approach has one clear drawback.
\emph{Every} critical pair is computed and checked twice:
once for Buchberger's lcm criterion, and again for the F5 criteria.
Although Faug\`ere's Criterion also checks for divisibility,
it checks only polynomials of smaller index,
whereas Buchberger's criterion checks \emph{all} polynomials,
and in most systems the number of polynomials of equal index
is much larger than the total of all polynomials having lower index.
Indeed, we will see in Section \ref{sec:results}
that this seemingly innocuous check can accumulate a significant time penalty.
This would be acceptable if the algorithm routinely used $d_B$ to terminate,
but F5 generally terminates from its own internal mechanisms
\emph{before} $d=d_B$!
Thus, except for pathological cases,
the penalty for this short-circuiting mechanism
is not compensated by a discernible benefit.

\subsection{F5+: Use F5's criteria on non-redundant critical pairs}
\label{sec:f5+}
We now describe a variant that uses information from F5 itself,
along with the theory developed in Section~\ref{sec:problem},
to reduce, if not eliminate, the penalty necessary to force termination.
We restate only those algorithms of~\cite{fF52002Corrected}
that differ from the original (and the differences are in fact minor).

The fundamental motivation of this approach stems from the fact that
a polynomial is redundant if and only if {\tt TopReduction} rejects a reductor
on account of one of the F5 criteria.
Understood correctly, this means that F5 ``knows'' at this point whether a polynomial is redundant.
We would like to ensure that it does not ``forget'' this fact.
As long as this information remains available to the algorithm,
identifying GB- and F5-critical pairs will be trivial.
Thus, our tasks are:
  \begin{enumerate}
    \item Modify the data structures to flag a labeled po\-lynomial
      as redundant or non-redundant.
    \item Use this flag to distinguish F5- and GB-critical pairs.
    \item Use the GB-critical pairs to decide when to terminate.
  \end{enumerate}
We address each of these in turn.


To distinguish between redundant and non-redun\-dant labeled polynomials,
we add a third, boolean field to the structure of a labeled polynomial.
We mark a redundant labeled polynomial with $b=1$, and
a non-redundant one with $b=0$.
Without loss of generality, the inputs are non-redundant,
so the first line of subalgorithm {\tt F5} can change to
\begin{algorithmic}
    \STATE $r_i:=(\mf_i,f_i,0) \in R \times \{0,1\}$
\end{algorithmic}
For all other labeled polynomials, the value of $b$ is set to~0 in algorithm \texttt{Spol}, then defined by the
behaviour of the {\tt Reduction} subalgorithm; see below. 


\label{subsec:implementationd0}
\label{subsec:tracking redundants}
The next step is to detect redundant polynomials; we do this in {\tt IsReducible}. 
In an unmodified F5, the return value of {\tt
IsReducible} is either a labeled polynomial $r_{i_j}$ (a polynomial that reduces
$r$) or $\emptyset$. The return value $\emptyset$ can
have two meanings:
\begin{enumerate}
  \item \label{item:nored} There exists no reducer of the input.
  \item \label{item:rejred} There exist reducers of the input, but their reductions are rejected.
\end{enumerate}
Algorithm \ref{alg:isreducible}, which replaces the original {\tt IsReducible} subalgorithm,
distinguishes these two possibilities by adding a boolean to the output:
$b=0$ in case (\ref{item:nored}) and $b=1$ otherwise.
We also need to modify subalgorithm {\tt TopReduction} to
use this new data; see Algorithm \ref{alg:topreduction}.

\label{sub:hybrid}
We now describe the main routine of the new variant,
which fulfills the following conditions:
\begin{enumerate}
\item Compute as low a degree bound as possible.
\item Minimize any penalty to the algorithm's performance. 
\end{enumerate}
An easy way to estimate $d_0$ would be to compute the highest degree of a GB-critical pair.
Although this would be correct, experience suggests that, in general,
it is much higher than necessary (see Table \ref{tab:timings} in Section \ref{sec:results}).
Instead, the new variant will use the criteria of the F5 algorithm
to identify GB-critical pairs that \emph{probably} reduce to zero.
How can we identify such pairs?
The following method seems intuitively correct:
\emph{when all GB-critical pairs are rejected by one of the F5 criteria}.

However, Situation~\ref{sit:f5gb} implies that this intuition may be \emph{in}correct.
Thus, \emph{once the algorithm reaches that degree} (and not earlier),
it uses Buchberger's lcm criterion to decide
whether the remaining GB-critical pairs reduce to zero.
If it can verify this, then the algorithm can terminate.

This differs from the approach of~\cite{arsPhd2005} in two important ways.
\begin{enumerate}
\item Rather than checking all pairs against the lcm criterion,
    it checks only GB-critical pairs that F5 also rejects as unnecessary.
    After all, it follows from Theorem~\ref{lem:f5criticalrepresentation}
    that F5-critical pairs can be necessary
    \emph{only if they substitute for a GB-critical pair}.
\item It checks the GB-critical pairs only once the F5 criteria suggest that it should terminate.
\end{enumerate}

\noindent
We call this variant F5+;
see Algorithm \ref{alg:f5hybrid}.
\begin{algorithm}[h!]
  \caption{{\tt IsReducible}}
  \label{alg:isreducible}
  \begin{algorithmic}
    \REQUIRE $\left\{\begin{array}{l}
        r_{i_0}\textrm{, a labeled polynomial of }R\\
        G=[r_{i_1},\dots,r_{i_r}]\\
        k\in\mathbb{N}\\
        \varphi\textrm{, a normal form}\end{array}
      \right.$
    \STATE $b := 0$
    \FOR{$j$ from 1 to $r$}
      \IF{$(u := \frac{\lm(r_{i_0})}{\lm(r_{i_j})} \in T)$}
        \IF{(neither criterion detects $(r_{i_0},r_{i_j}))$} 
          \RETURN $(r_{i_j},0)$
        \ELSE 
          \STATE $b := 1$
        \ENDIF
      \ENDIF
    \ENDFOR
    \RETURN $(\emptyset,b)$
  \end{algorithmic}
\end{algorithm}

\begin{algorithm}[h!]
  \caption{{\tt TopReduction}}
  \label{alg:topreduction}
  \begin{algorithmic}
    \REQUIRE $\left\{\begin{array}{l}
          r_{k_0}\textrm{, a labeled polynomial of }R\\
          G\textrm{, a list of elements of }R\\
          k\in\mathbb{N}\\
          \varphi\textrm{, a normal form}
        \end{array}\right.$
    \IF{$\poly(r_{k_0}) = 0$}
      \RETURN $(\emptyset,\emptyset)$
    \ENDIF
    \STATE $(r',b) := ${\tt IsReducible}$(r_{k_0},G,k,\varphi)$
    \IF{$r' = \emptyset$}
    \STATE $r_{k_0} := \big(\lab(r_{k_0}),\frac{1}{\lc(r_{k_0})}
    \poly(r_{k_0}),b\big)$
    \RETURN $(r_{k_0},\emptyset)$ 
    \ELSE
      \STATE $r_{k_1} = r'$
      \STATE $u := \frac{\lm(r_{k_0})}{\lm(r_{k_1})}$
      \IF{$u\lab(r_{k_1}) \prec \lab(r_{k_0})$}
        \STATE $r_{k_0} := \big(\lab(r_{k_0}), \poly(r_{k_0}) - u
        \poly(r_{k_1}),b\big)$ 
        \RETURN $(\emptyset,\{r_{k_0}\})$
      \ELSE
        \STATE $N := N+1$
        \STATE $r_N := \big(u\lab(r_{k_1}), u \poly(r_{k_1}) -
        \poly(r_{k_0}),b\big)$ 
        \STATE Add Rule $(r_N)$
        \RETURN $(\emptyset,\{r_N,r_{k_0}\})$
      \ENDIF
    \ENDIF    
  \end{algorithmic}
\end{algorithm}

\begin{algorithm}[h!]
  \caption{{\tt F5+}}
  \label{alg:f5hybrid}
  \begin{algorithmic}[1]
    \REQUIRE $\left\{\begin{array}{l}
          i\in\mathbb{N}\\
          f_i\in\Kx\\
          G_{i+1} \subset R\times \Kx,\textrm{ such that }\poly(G_{i+1})
            \textrm{ is a Gr\"obner basis of }\mathrm{Id}(f_{i+1},\ldots,f_m)
        \end{array}\right.$
    \STATE $r_i := (\mathbf{F}_i,f_i,0)$
    \STATE $\varphi_{i+1}:=\mathrm{NF}(.,\poly(G_{i+1}))$
    \STATE $G_i := G_{i+1}\cup\{r_i\}$
    \STATE \COMMENT{$P$ is the usual set of pairs; $P^*$ is the set of GB-pairs detected by the F5 criterion}
    \STATE $P := \emptyset$
    \STATE $P^* := \emptyset$
    \FOR{$r_j\in G_{i+1}$}\label{line:begin initial pairs}
      \STATE $p := \texttt{CritPair}(r_i,r_j,i,\varphi_{i+1})$
      \IF{$p = \emptyset$ and $r_j$ non-redundant}
        \STATE Add $(\lcm(\lm(\poly(r_i)),\lm(\poly(r_j))),r_i,r_j)$ to $P^*$
      \ELSE
        \STATE Add $p$ to $P$
      \ENDIF
    \ENDFOR\label{line:end initial pairs}
    \STATE Sort $P$ by degree
    \WHILE{$P\neq\emptyset$}\label{line:beginning of loop}
      \STATE $d := \deg(\mathrm{first}(P))$
      \STATE Discard from $P^*$ all pairs that are not of maximal degree
      \IF{$d\leq\max\{\deg(p):p\in P^*\}$ or $\exists p\in P^*$ that does not satisfy Buchberger's lcm criterion}
        \STATE $P_d := \{p\in P:\deg(p) = d\}$
        \STATE $P := P\backslash P_d$
        \STATE $F := \texttt{Spol}(P_d)$
        \STATE $R_d := \texttt{Reduction}(F,G_i,i,\varphi_{i+1})$
        \FOR {$r\in R_d$}\label{line:begin add pairs}
          \FOR{$r_j\in G_i$}
            \STATE $p := \texttt{CritPair}(r,r_j,i,\varphi_{i+1})$
            \IF{$p = \emptyset$ and $r,r_j$ both non-redundant}
              \STATE Add $(\lcm(\lm(\poly(r)),\lm(\poly(r_j))),r,r_j)$ to $P^*$
            \ELSE
              \STATE Add $p$ to $P$
            \ENDIF
          \ENDFOR
          \STATE $G_i := G_i\cup\{r\}$
        \ENDFOR\label{line:end add pairs}
        \STATE Sort $P$ by degree\label{line:end of loop}
      \ELSE
        \STATE $P:=\emptyset$
      \ENDIF
    \ENDWHILE
    \RETURN $G_i$
  \end{algorithmic}
\end{algorithm}

\begin{rem}
An implementation of F5+ has to take care when checking Buchberger's lcm criterion,
on account of the phenomenon of \emph{Buchberger triples}~\cite[p.~229]{bwkGroebnerBases1993}.
In \cite{arsPhd2005}, this is implemented similarly to the
``{\tt Update}'' algorithm of~\cite{bwkGroebnerBases1993,gmInstallation1988}.
The current F5+ takes a more traditional route;
it records all critical pairs that have generated s-polynomials.
The burden on memory is minimal.
\end{rem}

\subsection{Correctness and termination of F5+}
\label{sub:termcorr}
As a last step we have to show that F5+ terminates correctly.
\begin{thm}
  If F5+ terminates, the result is a Gr\"obner basis of the input.
\end{thm}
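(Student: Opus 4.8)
The plan is to reduce the statement to Theorem~\ref{thm:f5characterization} and the analysis of Section~\ref{sec:problem}. First observe that F5+ performs exactly the polynomial arithmetic of the original F5: the boolean field $b$ is written by {\tt IsReducible} and {\tt TopReduction} but is never consulted by any step that changes a polynomial, and the auxiliary list $P^*$ is pure bookkeeping. The only genuine departure is the {\tt ELSE} branch of the main loop of Algorithm~\ref{alg:f5hybrid}, which sets $P:=\emptyset$ and thereby stops the iteration \emph{before} the degree $d:=\deg(\mathrm{first}(P))$ is processed. Hence it suffices to show that whenever that branch is taken, the current $\poly(G_i)$ is already a Gr\"obner basis of $\mathrm{Id}(f_i,\ldots,f_m)$; the theorem then follows by the usual induction on the iteration index (from $i=m$ down to $i=1$), the base case $G_{m+1}=\emptyset$ being trivial and the hypothesis that $\poly(G_{i+1})$ is a Gr\"obner basis of $\mathrm{Id}(f_{i+1},\ldots,f_m)$ being built into the input of each call.

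So suppose the {\tt ELSE} branch fires at degree $d$. Because the main loop treats degrees in increasing order, every critical pair of degree strictly less than $d$ has already been disposed of --- rejected by one of F5's two criteria, or, by the time its degree step finished, given a standard representation --- so Theorem~\ref{thm:f5characterization} shows that $\poly(G_i)$ is a $(d-1)$-Gr\"obner basis of $\mathrm{Id}(f_i,\ldots,f_m)$. It remains to prove that $p_{ab}$ has a standard representation for every $r_a,r_b\in G_i$; since all of $G_i$ has degree at most $d-1$, only the pairs with $\deg\gamma_{ab}\geq d$ are still open. I would establish these by Noetherian induction on the least common multiple $\gamma_{ab}=\lcm(\lm(r_a),\lm(r_b))$, taking the usual care over Buchberger triples~\cite[p.~229]{bwkGroebnerBases1993}, and, at a fixed value of $\gamma$, treating GB-critical pairs before F5-critical ones.

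The induction step splits on the type of the pair. If $(r_a,r_b)$ is an F5-critical pair, Theorem~\ref{lem:f5criticalrepresentation} yields either a standard representation of $\poly(r_{ab})$ outright, or a GB-critical pair $(r_k,r_\ell)$ with $\gamma_{k\ell}=\gamma_{ab}$ and $\poly(r_{ab})=\poly(r_{k\ell})+\sum_w\lambda_w\poly(r_w)$, where $\gamma_{k\ell}>\lambda_w\lm(r_w)$ for every $w$; a standard representation of $\poly(r_{k\ell})$, combined with the $(d-1)$-Gr\"obner property applied to the lower tail terms $\lambda_w\poly(r_w)$, then produces one for $\poly(r_{ab})$, so the F5-critical case reduces to the GB-critical case at the same $\gamma$ (compare Situation~\ref{sit:f5gb} and Corollary~\ref{lem:f5criticalnonredundant}). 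If $(r_a,r_b)$ is a GB-critical pair with $\deg\gamma_{ab}\geq d$, I would use the two halves of the exit condition --- $d>\max\{\deg p:p\in P^*\}$ and every pair of $P^*$ satisfying Buchberger's lcm criterion --- together with the recorded list of all critical pairs that have generated an s-polynomial, to exhibit a reducer $r_c\in G_i$ with $\lm(r_c)\mid\gamma_{ab}$. The inductive hypothesis then supplies standard representations of $p_{ac}$ and $p_{bc}$, and Buchberger's lcm criterion (Theorem~\ref{def:buch2}) delivers one for $p_{ab}$.

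The hard part is exactly this last step: the exit condition is phrased purely in terms of $P^*$, i.e.\ in terms of those GB-critical pairs that F5's criteria rejected, all of degree below $d$, whereas one must certify a standard representation for \emph{every} still-open GB-critical pair --- including one that survived both F5 criteria and is therefore sitting, unprocessed, in $P$ with degree $\geq d$ at the moment of exit. Showing that $d>\max\{\deg p:p\in P^*\}$ together with Buchberger's lcm criterion on $P^*$ forces such a pair to satisfy Buchberger's lcm criterion as well --- so that a suitable reducer $r_c$, with companion pairs of strictly smaller lcm, can be found --- is the crux, and it is precisely here that the {\tt Update}-style tracking of generated critical pairs noted after Algorithm~\ref{alg:f5hybrid} does the essential work.
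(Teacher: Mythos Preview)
The paper's own proof is a single sentence: ``This follows from Buchberger's lcm criterion.'' The authors treat the exit condition of Algorithm~\ref{alg:f5hybrid}---which fires only when $d$ exceeds every degree in $P^*$ \emph{and} every surviving pair in $P^*$ passes Buchberger's lcm criterion (Buchberger triples being handled via the recorded list of generated s-polynomials, per the Remark following the algorithm)---as immediately certifying the Gr\"obner-basis property. They do not set up an induction on $\gamma_{ab}$, do not invoke Theorem~\ref{lem:f5criticalrepresentation}, and do not separate GB- from F5-critical pairs inside the proof.

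Your approach is therefore far more elaborate than the paper's, and in a sense attempts to fill in what the paper leaves implicit. The reduction of the F5-critical case to the GB-critical case via Theorem~\ref{lem:f5criticalrepresentation} is sound and nicely mirrors the analysis of Section~\ref{sec:problem}. But the concern you articulate in your final paragraph---that the exit condition speaks only of $P^*$, while a GB-critical pair \emph{not} rejected by {\tt CritPair} could still be sitting unprocessed in $P$ at degree $\geq d$---is exactly the step you do not complete. You call it ``the crux'' and point to the {\tt Update}-style bookkeeping, yet you never actually exhibit the reducer $r_c$ for such a pair or explain why the recorded data forces its existence. So your proposal is, by your own admission, a sketch that stops at the decisive point.

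In summary: the paper offers a one-line appeal to Buchberger's lcm criterion; you offer a more structured argument that is genuinely more careful up to the last step, where you halt at precisely the subtlety the paper also does not spell out. If you want a complete proof along your lines, you must close that gap---showing that every GB-critical pair still in $P$ at the moment of exit is caught by Buchberger's lcm criterion via the recorded list of processed pairs---rather than merely naming it.
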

\begin{proof}
  This follows from 
  Buchberger's lcm criterion.
\end{proof}

\begin{thm}
  For a given homogeneous ideal $I$ as input,
  F5+ terminates after finitely many steps.
\end{thm}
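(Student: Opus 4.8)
The plan is to show that F5+ terminates by exhibiting a degree bound beyond which the main {\tt WHILE} loop cannot continue. The key observation is that, unlike the original F5, the variant F5+ has an explicit termination test governed by the set $P^*$ and Buchberger's lcm criterion: the loop body executes at degree $d$ only if $d\leq\max\{\deg(p):p\in P^*\}$ or some $p\in P^*$ fails Buchberger's lcm criterion. So it suffices to bound the degrees at which either of these two conditions can hold.

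First I would argue that only finitely many \emph{non-redundant} labeled polynomials are ever added to $G_i$. This is the crucial structural input, and it is essentially the classical Noetherian argument: by Definition~\ref{def:redundantpoly}, a labeled polynomial $r$ added to $G_i$ with flag $b=0$ has $\lm(\poly(r))$ \emph{not} divisible by $\lm(\poly(r'))$ for any $r'$ already in $G_i$, so each such addition strictly enlarges $HT(\poly(G_i))$; since $\rp$ is Noetherian, this happens only finitely often. (One must check that the flag $b$ is set correctly — that $b=0$ really does mean ``{\tt IsReducible} found no reducer at all'', hence non-redundant at the moment of addition — but this is exactly what Algorithms~\ref{alg:isreducible} and~\ref{alg:topreduction} are designed to guarantee, and it follows by inspection.) Consequently there is a maximal degree $d_{\mathrm{GB}}$ among all GB-critical pairs that are ever formed: $P^*$ only ever receives pairs $(r,r_j)$ with both $r,r_j$ non-redundant, and there are only finitely many such polynomials, so $\max\{\deg(p):p\in P^*\}$ is bounded by some fixed $D_1$ for the entire run.

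Next I would handle the second disjunct. Once the loop degree $d$ exceeds $D_1$, the line ``Discard from $P^*$ all pairs that are not of maximal degree'' together with the fact that no new GB-pairs of degree $>D_1$ are ever added means $P^*$ stabilizes (or empties); the test then reduces to ``$\exists p\in P^*$ failing Buchberger's lcm criterion''. Here I would invoke Theorem~\ref{def:buch2}: as F5+ proceeds degree by degree and, by Theorem~\ref{thm:f5characterization} together with Remark~\ref{rem:standardrepresentation}, produces a $d$-Gr\"obner basis of $I$ at the end of each degree step, every critical pair of degree $\leq d$ already has a standard representation once that degree step completes. So for $d$ large enough — specifically, once $d$ exceeds the degrees of all pairs $p_{ik},p_{jk}$ arising in the hypotheses of Buchberger's lcm criterion applied to the finitely many elements of the stabilized $P^*$ — condition (B) of Theorem~\ref{def:buch2} is met for every $p\in P^*$, so every such $p$ \emph{does} satisfy the lcm criterion, and the {\tt ELSE} branch fires, setting $P:=\emptyset$ and exiting the loop. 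Thus the loop runs for at most finitely many distinct degrees, and since each degree step processes a finite set $P_d$ and {\tt Reduction} returns a finite $R_d$, the algorithm halts.

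The main obstacle I anticipate is making the second step airtight, namely pinning down \emph{when} all pairs in the stabilized $P^*$ satisfy Buchberger's lcm criterion. One has to rule out a degenerate scenario in which, at every large degree $d$, some GB-pair $p\in P^*$ still fails the lcm criterion because its ``certifying'' pairs $p_{ik}, p_{jk}$ lack standard representations \emph{at that moment} — this is precisely the concern behind Situation~\ref{sit:f5gb} and Corollary~\ref{lem:f5criticalnonredundant}. The resolution is that $P^*$ is finite and fixed past degree $D_1$, so there is a single finite collection of lcm-criterion instances to satisfy; each instance requires standard representations of finitely many lower-or-equal-degree s-polynomials, and by the degree-by-degree $d$-Gr\"obner basis property these are all eventually in place. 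I would make this quantitative by taking $D$ to be the maximum over that finite collection of the degrees involved, and conclude the loop cannot survive past degree $D$.
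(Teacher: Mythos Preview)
Your overall strategy is sound, and in one respect it is more careful than the paper's own proof. But there is one substantive piece you assert without argument that the paper treats as the heart of the matter.

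\textbf{What you skip that the paper proves.} You write at the end that ``each degree step processes a finite set $P_d$ and {\tt Reduction} returns a finite $R_d$''. The paper devotes the bulk of its termination proof to precisely this: an inductive case analysis showing that at each degree $d$, only finitely many new labeled polynomials are generated. The non-obvious case is in {\tt TopReduction}, where a reduction with $u_\red\lab(r_\red)\succ\lab(r)$ does not alter $r$ but \emph{spawns a new} labeled polynomial $r'$ of the same degree; your proposal never mentions this mechanism. One must argue that such spawning cannot cascade indefinitely. The paper does so by observing that each spawned $r'$ contributes a fresh signature to the list of rules, so by the Rewritten Criterion the same reducer cannot be chosen again for $r$, and since there are only finitely many signatures of degree $d$ the process halts. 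Without this argument your claim that $\#P<\infty$ at each iteration is unsupported, and the induction on degree does not get off the ground.

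\textbf{What you supply that the paper elides.} Conversely, the paper's proof essentially stops after noting (via the same Noetherian argument you give) that only finitely many non-redundant polynomials arise, hence only finitely many GB-critical pairs, hence ``after finitely many steps, only F5-critical pairs are left in $P$''. It does not explicitly explain why the \emph{second} disjunct of the {\tt IF} test --- ``$\exists p\in P^*$ that does not satisfy Buchberger's lcm criterion'' --- must eventually fail once $d$ exceeds $\max\{\deg(p):p\in P^*\}$. Your treatment of this point (once $P^*$ stabilizes past degree $D_1$, the $d$-Gr\"obner basis property of $\poly(G)$ from Theorem~\ref{thm:f5characterization} and Remark~\ref{rem:standardrepresentation} furnishes standard representations for all auxiliary s-polynomials needed in Theorem~\ref{def:buch2}, so every surviving pair passes the criterion) is a genuine addition and closes a step the paper leaves implicit.

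In short, the two arguments are complementary: the paper proves finiteness \emph{within} a degree step; you prove finiteness of the \emph{number} of degree steps via the exit test. A complete proof needs both, so you should import the paper's case analysis of {\tt Reduction}/{\tt TopReduction} (or at least flag it as a lemma to be established) rather than assert it.
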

\begin{proof}
  We first claim that after generating new critical pairs for $P$ in lines~\ref{line:begin add pairs}--\ref{line:end add pairs},
  F5+ satisfies $\# P < \infty$ at line~\ref{line:end of loop},
  and thus satisfies $\# P < \infty$ when the loop at line~\ref{line:beginning of loop} iterates anew.
  To show this, we will show that at any given degree $d$,
  the algorithm generates only finitely many polynomials and critical pairs.
  We proceed by induction on $d$; certainly $\# P<\infty$ after the loop
  in lines~\ref{line:begin initial pairs}--\ref{line:end initial pairs}. 
  Assume therefore that $\# P < \infty$ at line~\ref{line:beginning of loop}. 
  By the assumption that $\# P<\infty$, we have $\# P_d<\infty$,
  so {\tt Spol} generates only finitely many new polynomials.
  We now consider {\tt Reduction}; let $r\in ToDo$.
  \begin{enumerate}
    \item If 
      $\poly(r)=0$, then $r$ is effectively discarded; the algorithm does not add it to $G$,
      nor use it to generate new critical pairs.
    \item If 
      $\poly(r)\neq 0$,
      then {\tt IsReducible} checks for possible reducers:
      \begin{enumerate}
        \item If no reducer is returned, then $r$ is returned and added to
          $G$. All newly computed critical pairs generated by $r$ have degree $>d$;
          their number is finite because $G$ is currently finite.
        \item If {\tt IsReducible} returns $r_\red\in G$ such that there exists $u_\red \in T$
          satisfying $u_\red\lm(r_\red)=\lm(r)$ and
          $\lab(r) \succ u_\red \lab(r_\red)$, then $\poly(r)-u_\red\poly(r_\red)$
          replaces $\poly(r)$ in $r$, and $r$ is checked for further reductions.
          Note that $\lm(\poly(r))$ has decreased.
        \item If {\tt IsReducible} returns $r_\red\in G$ such that there exists $u_\red\in T$
          satisfying $u_\red\lm(r_\red)=\lm(r)$ but
          $u_\red \lab(r_\red) \succ \lab(r)$, then $r$ is not
          changed, but kept for further reduction checks. A new
          element
          $r'=(u_\red \lab(r_\red),\poly(r)-u_\red \poly(r_\red))$ is generated,
          and its signature $\lab(r')$ added to the list of rules. 
          Note that $\deg r'=\deg r$ and $\deg\sm(r')=\deg\sm(r)$.

          Only finitely many distinct reducers
          could lead to new elements $r'$.
          Since $\lab(r')$ was added to the list of rules, the Rewritten Criterion implies that $r_\red$ will not
          be chosen again as a reducer of $r$.
          There are only finitely many signatures of degree $d$,
          so only finitely many new elements can be added in this way.
      \end{enumerate}
  \end{enumerate}
  It follows that in each degree step only finitely many new polynomials are
  computed, so only finitely many new critical pairs are generated.
  Hence $\# P < \infty$ at line~\ref{line:end of loop}.

  To finish the proof we have to show that after finitely many steps, only 
  F5-critical pairs are left in $P$. 
  There can only be finitely many GB-critical pairs as their generating 
  labeled polynomials have to be non-redundant. Since $R$ is Noetherian, only
  finitely many non-redundant polynomials can be computed.

  Thus F5+ terminates after finitely many steps.
\end{proof}

\subsection{Experimental results}
\label{sec:results}
We implemented these variants in the {\sc Singular} 
kernel to compare performance.
(The F5 implementation in {\sc Singular} is still under development.) In Table \ref{table:1} we compare
timings and degree bounds for some examples.
All systems are homogeneous and
computed over a field of characteristic 32003.
The random systems are generated using the function {\tt sparseHomogIdeal} from {\tt random.lib} in {\sc Singular};
generating polynomials have a sparsity of 85-90\% and degrees $\leq 6$.
This data was recorded from a workstation
running Gentoo Linux on an Intel\textregistered \ Xeon\textregistered \ X5460 CPU
at 3.16GHz with 64 GB RAM.

Table~\ref{tab:timings} shows that
the tests for F5+ do
not slow it down significantly. 
But this is expected, since the modifications add trivial overhead,
and rely primarily on information that the algorithm already has available.

The computed degrees in Table~\ref{tab:timings} bear some discussion.
We have implemented F5+ in two different ways.
Both are the same in that they estimate the maximum necessary degree
by counting the maximal degree of a GB-critical pair
not discarded by the {\tt CritPair} subalgorithm.
However, one can implement a slightly more efficient {\tt CritPair} algorithm
by discarding pairs that pass Faug{\`e}re's Criterion, but are rewritable.
(The basic F5 checks the Rewritten Criterion only in subalgorithm {\tt Spol}.)
Thus one might compute a different maximal degree of $P^*$ in each case:
when {\tt CritPair} discards only those pairs detected by Faug\`ere's Criterion,
we designate the maximal degree of $P^*$ as $d_F$;
when {\tt CritPair} discards pairs detected by the Rewritten Criterion as well,
we designate the maximal degree of $P^*$ as $d_{FR}$.
We denote the degree where the original F5 terminates by $d_{\textrm{F5}}$,
and the maximal degree of a polynomial generated by $d_{\textrm{maxGB}}$.
Recall also that the maximal degree estimated by F5B is $d_B$ (Section~\ref{sub:F5Brs}).

It is always the case that $d_{\textrm{maxGB}}\leq d_{\textrm{F5}}$; indeed,
we will have $d_{\textrm{maxGB}}\leq d_A$ for any algorithm $A$ that computes a
Gr\"obner basis of a homogeneous system incrementally by degree.

On the other hand, it is always the case that $d_F,d_{FR}\leq d_{\textrm{F5}}$;
$d_{\textrm{F5}}$ counts F5-critical pairs as well as GB-critical pairs,
whereas $d_F,d_{FR}$ count only GB-critical pairs that are not rejected
by one or both of the F5 criteria.
Thus F5+ always starts its manual check for termination no later than F5 would terminate,
and sometimes terminates before F5.
For example, the termination mechanisms activate for F-855, Eco-10 and~-11, and Cyclic-8,
so F5B and F5+ both terminate at lower degree than F5.
With little to no penalty, F5+ terminates first,
but F5B terminates \emph{well after} F5 in spite of the lower degree!
Even in Katsura-$n$, where $d_{\textrm{maxGB}}=d_B < d_F=d_{FR}=d_{\textrm{F5}}$,
the termination mechanism of F5+ incurs almost no penalty,
so its timings are equivalent to those of F5,
whereas F5B is slower.
In other examples, such as Cyclic-7 and (4,5,12),
F5 and (therefore) F5+ terminate at or a little after the degree(s) predicted by $d_F$ and $d_{FR}$,
but \emph{before} reaching the maximal degree computed by $d_B$.

\begin{landscape}
\begin{threeparttable}
\centering
  \caption{Timings (in seconds) \& degrees of F5, F5B, and F5+}
  \centering\label{tab:timings}
  \begin{tabular}{|c|c|r|r|r|r|c|c|c|c|c|c|c|c|} \hline
    Examples\tnote{1}&regular?&F5&F5B&F5+&F5/F5B&F5/F5+&$d_{\textrm{maxGB}}$\tnote{2}&$d_{\textrm{F5}}$\tnote{3}&
      $d_{\textrm{GB-pair}}$\tnote{4}&$d_B$\tnote{5}&$d_F$\tnote{6}&
      $d_{FR}$\tnote{7}\\ \hline \hline
    {\tt Katsura-9}&yes&39.95&53.97&40.23&0.74&0.99&13&16&21&13&16&16\\ \hline
    {\tt Katsura-10}&yes&1,145.47&1,407.92&1,136.43&0.80&1.00&15&18&26&15&18&18\\ \hline
    {\tt F-855}&no&9,831.81&11,364.47&9,793.17&0.86&1.00&14&18&20&17&17&16\\ \hline
    {\tt Eco-10}&no&47.26&57.97&46.67&0.82&1.01&15&20&23&17&17&17\\ \hline
    {\tt Eco-11}&no&1,117.13&1,368.44&1,072.47&0.82&1.04&17&23&26&19&19&19\\ \hline
    {\tt Cyclic-7}&no&6.24&9.18&6.21&0.67&1.00&19&23&28&24&23&21\\ \hline
    {\tt Cyclic-8}&no&3,791.54&4,897.61&3,772.66&0.77&1.00&29&34&41&33&32&30\\ \hline \hline
    4,6,8&no&195.45&204.88&195.69&0.95&1.00&22&36&42&34&34&34\\ \hline
    5,4,8&yes&45.103&46.930&45.123&0.96&1.00&20&22&35&23&20&20\\ \hline
    6,4,8&no&46.180&46.880&46.247&0.99&1.00&20&20&34&22&20&20\\ \hline
    7,4,8&no&0.827&0.780&0.830&1.060&1.00&14&19&27&14&17&15\\ \hline
    8,3,8&no&122.972&126.816&123.000&0.97&1.00&22&37&35&26&31&29\\ \hline
    4,5,12&no&4.498&5.680&4.590&0.79&0.98&29&33&37&42&32&30\\ \hline
    6,5,12&yes&12.071&21.150&12.060&0.57&1.00&50&54&73&55&54&50\\ \hline
    8,4,12&no&46.122&47.613&47.750&0.97&0.97&27&35&44&30&34&29 \\ \hline
    12,4,12&no&14.413&14.897&14.360&0.97&1.00&42&55&60&43&53&43 \\ \hline
    4,3,16&yes&1.439&1.403&1.450&1.03&0.99&15&15&23&18&15&15 \\ \hline
    6,3,16&yes&36.300&37.136&36.300&0.98&1.00&10&14&23&15&14&13 \\ \hline
    8,3,16&yes&467.560&471.737&467.530&0.99&1.00&12&16&21&13&15&13 \\ \hline
    12,3,16&yes&210.327&206.441&210.311&1.02&1.00&21&25&34&20&24&23 \\ \hline
    4,3,20&yes&1.512&1.680&1.500&0.90&1.01&16&22&24&22&21&21\\ \hline
    6,4,20&no&1,142.433&1,327.540&1,144.370&0.86&1.00&27&37&39&29&35&31\\ \hline
    8,4,20&no&8.242&8.230&8.251&1.00&1.00&35&40&48&36&40&37 \\ \hline
    12,3,20&yes&0.650&0.693&0.650&0.94&1.00&22&26&34&27&26&23 \\ \hline
    16,3,20&no&2.054&2.060&2.050&1.00&1.00&26&26&41&27&26&26 \\ \hline
  \end{tabular}
    \begin{tablenotes}\footnotesize
    \item[1]{The notation $(a,b,c)$ denotes a random system of $a$ generators with maximal degree $b$ in a polynomial ring of $c$ variables.}
    \item[2]{maximal degree in GB}
    \item[3]{observed degree of termination of F5}
    \item[4]{maximal degree of a GB-critical pair}
    \item[5]{maximal degree estimated by Buchberger's lcm criterion; see Section~\ref{sub:F5Brs}}
    \item[6]{maximal degree of all GB-critical pairs not 
      detected by Faug\`ere's Criterion}
    \item[7]{maximal degree of all GB-critical pairs not 
      detected by Faug\`ere's Criterion \emph{or} the 
      Rewritten Criterion}
    \end{tablenotes}
    \label{table:1}
\end{threeparttable}
\end{landscape}

\section{Concluding remarks, and a conjecture}\label{sec:conclusion}
The new variant of F5 presented here is a straightforward solution to the problem of termination:
it distinguishes F5- and GB-critical pairs and tracks the highest degree of a necessary GB-critical pair.
Thus F5+ provides a self-generating, correct, and
efficient termination mechanism in case F5 does not terminate for some systems.
In practice, F5+ terminates before reaching the degree cutoff,
but it is not possible to test all systems, nor practical to determine \emph{a priori}
the precise degree of each Gr\"obner basis.
The question of whether F5, as presented in~\cite{fF52002Corrected},
terminates correctly on all systems, or even on all regular systems,
remains an important open question.


The following conjecture
arises from an examination of Table \ref{tab:timings}.

\begin{conjecture}\label{conj:degterm}
The F5 algorithm can terminate once all GB-critical pairs are rejected by the F5 criteria.
That is, it can terminate once $d=d_{\textrm{FR}}$.
\end{conjecture}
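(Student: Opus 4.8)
The plan is to prove the conjecture by induction on degree above $d_{\textrm{FR}}$, using the F5 characterization (Theorem~\ref{thm:f5characterization}), the dichotomy of Theorem~\ref{lem:f5criticalrepresentation}, and the signature-correctness statement of Proposition~\ref{prop: suff cond for correct sig}. After F5 completes the degree-$d_{\textrm{FR}}$ pass, $\poly(G)$ is a $d_{\textrm{FR}}$-Gr\"obner basis of $I$ by Theorem~\ref{thm:f5characterization} and the remark following it. By Theorem~\ref{thm:gb} it then suffices to show, for every $D>d_{\textrm{FR}}$ and every s-polynomial $r_{ij}$ with $\deg r_{ij}=D$ that F5 would form from the elements currently in $G$, that $\poly(r_{ij})$ has a standard representation with respect to $\poly(G)$; I would argue this by induction on $D$, so that when treating degree $D$ one may assume $\poly(G)$ is a $(D-1)$-Gr\"obner basis. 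If $(r_i,r_j)$ is an F5-critical pair, Theorem~\ref{lem:f5criticalrepresentation} applies: either $\poly(r_{ij})$ already has a standard representation, or there is a GB-critical pair $(r_k,r_\ell)$ with $\gamma_{k\ell}=\gamma_{ij}$ (hence $\deg r_{k\ell}=D$) and $\poly(r_{ij})=\poly(r_{k\ell})+\sum_w\lambda_w\poly(r_w)$ with $\gamma_{k\ell}>\lambda_w\lm(r_w)$ for all $w$; in the latter case, once $\poly(r_{k\ell})$ is known to have a standard representation, substituting it into this identity produces one for $\poly(r_{ij})$. So everything reduces to the single claim: \emph{every GB-critical pair $(r_k,r_\ell)$ with $\deg r_{k\ell}=D>d_{\textrm{FR}}$ --- which, by definition of $d_{\textrm{FR}}$, must be rejected by Faug\`ere's Criterion or by the Rewritten Criterion --- has $\poly(r_{k\ell})$ with a standard representation w.r.t. $\poly(G)$.}

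First, the Faug\`ere-rejection case. Write $p=\ind(r_k)$ and, without loss of generality, $\lab(r_{k\ell})=\sigma_0\mf_p$ with $\sigma_0=u_k\,\sm(r_k)$ and $u_k=\gamma_{k\ell}/\lm(r_k)$. Suppose $u_k r_k$ is detected: there is $r\in G$ with $q:=\ind(r)>p$ and $\lm(r)\mid\sigma_0$. By Proposition~\ref{prop:ST correct} we may write $\poly(r)=\sum_{i\ge q}g_i f_i$ with $\lm(g_q)=\sm(r)$, so that $f_p\cdot\big(\sum_{i\ge q}g_i\mf_i\big)-\poly(r)\mf_p$ is a syzygy whose signature is $\lm(r)\mf_p$, the index $p<q$ dominating any term in the components $\mf_i$ with $i\ge q$. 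Scaling by $\sigma_0/\lm(r)\in T$ gives a syzygy $\sigma$ with $\Sig(\sigma)=\sigma_0\mf_p$ whose $\mf_p$-component has head term $\sigma_0$. Starting from any representation $\poly(r_{k\ell})=\sum_i h_i f_i$ attaining signature $\sigma_0\mf_p$, subtracting $\lc(h_p)\sigma$ cancels the head term of $h_p$ while introducing only terms in components $\mf_i$ with $i\ge q$, all of which are $\prec\sigma_0\mf_p$. Hence $\poly(r_{k\ell})$ has a representation of signature strictly below $\lab(r_{k\ell})$, so $\lab(r_{k\ell})$ is not the signature of $\poly(r_{k\ell})$ in the sense of Definition~\ref{def: signature}, and Proposition~\ref{prop: suff cond for correct sig} then furnishes a standard representation of $r_{k\ell}$, hence of $\poly(r_{k\ell})$.

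The remaining case --- rejection by the Rewritten Criterion --- is, I expect, the main obstacle. Here there is $r_a\in G$ with $\ind(r_a)=p$, $a>k$, and $\sm(r_a)\mid\sigma_0$; set $v=\sigma_0/\sm(r_a)$, so $v\,r_a$ carries the signature $\sigma_0\mf_p=\lab(r_{k\ell})$. Homogeneity gives $\deg r_a=\deg\sm(r_a)+\deg f_p$ and $\deg\sigma_0=D-\deg f_p$, whence $\deg r_a\le D$. If $\sm(r_a)$ is a \emph{proper} divisor of $\sigma_0$ then $\deg r_a\le D-1$, so $r_a$ and every labeled polynomial it spawned during reduction were computed at degree $\le D-1$; tracing F5's reduction sequence --- rewriting $\poly(r_{k\ell})$ by the more-reduced $\poly(v\,r_a)$ modulo corrections of strictly smaller signature --- should then show that $\poly(r_{k\ell})$ inherits a standard representation from the $(D-1)$-Gr\"obner basis. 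The hard subcase is $\sm(r_a)=\sigma_0$, i.e. $v=1$ and $\deg r_a=D$: then $r_a$ may have been produced only \emph{during} the degree-$D$ pass, as the result of reducing (or a rewrite step inside the reduction of) some non-rejected degree-$D$ critical pair --- and at degrees above $d_{\textrm{FR}}$ the only non-rejected pairs are F5-critical pairs, precisely the pairs we are trying to show superfluous, and which by Theorem~\ref{lem:f5criticalrepresentation} are themselves only ever needed as substitutes for GB-critical pairs. This is exactly the circular dependency isolated in Situation~\ref{sit:f5gb} and Corollary~\ref{lem:f5criticalnonredundant}. To break it one would need a well-foundedness principle on the signatures involved --- a minimal-counterexample argument proving that such a chain of Rewritten-rejections cannot keep producing genuinely new reduction data beyond degree $d_{\textrm{FR}}$. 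Supplying this principle is the crux, and it is the same structural subtlety that has so far blocked a proof that F5 itself terminates, which is why the statement appears here only as a conjecture.
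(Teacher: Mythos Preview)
The paper offers no proof of this statement: it is explicitly left as an open conjecture, with the authors noting that a proof ``is non-trivial, and lies beyond the scope of this paper'' and that ``the conjecture may well be false.'' The obstacle they name is precisely the circularity of Situation~\ref{sit:f5gb}, which is exactly what you isolate in your final paragraph. So on the headline question you and the paper agree: there is no proof, and the Rewritten-criterion dependency is why.

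That said, your Faug\`ere-criterion case is less settled than you present it. Two issues. First, Proposition~\ref{prop: suff cond for correct sig} is stated for labeled polynomials \emph{computed by the algorithm}; its proof chooses a minimal $r\in G$ and relies on s-polynomials of strictly smaller signature already having standard representations. A rejected $r_{k\ell}$ at degree $D>d_{\textrm{FR}}$ was never computed, and the smaller-signature s-polynomials at the \emph{same} degree $D$ are exactly what your induction is trying to handle, so invoking the proposition here smuggles in a signature-induction you have not set up. Second, your ``without loss of generality'' conflates two choices: that $r_k$ carries the larger signature (harmless relabeling) and that $u_k r_k$ is the side detected. Faug\`ere's Criterion may instead detect the \emph{smaller}-signature side $u_\ell r_\ell$; in that subcase your syzygy lowers only $\lab(u_\ell r_\ell)$, not $\lab(r_{k\ell})=\lab(u_k r_k)$, and the route through Proposition~\ref{prop: suff cond for correct sig} is blocked. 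The usual F5 correctness argument handles that subcase only under the hypothesis that all degree-$D$ s-polynomials of smaller signature already have standard representations --- again the very thing in question. So the Faug\`ere case is entangled with the same well-foundedness problem you flag in the Rewritten case, not cleanly separable from it.
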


Conjecture~\ref{conj:degterm} is \emph{not} a Corollary of Theorem~\ref{thm:f5characterization}!
There, correctness follows only if \emph{all} critical pairs
are rejected by the algorithm: GB- \emph{and} F5-critical pairs.
Similarly, a proof of Conjecture~\ref{conj:degterm}
would imply that we could drop altogether the check of Buchberger's criteria.

If one could show that $d_{\textrm{maxGB}}\leq d_{FR}$,
Conjecture~\ref{conj:degterm} would follow immediately.
However, such a proof is non-trivial, and lies beyond the scope of this paper.
The conjecture may well be false even if we replace $d_{FR}$ by $d_F$,
although we have yet to encounter a counterexample.
The difficulty lies in the possibility that 
Situation~\ref{sit:f5gb} applies.

%
%

\section{Acknowledgements}
The authors wish to thank Martin Albrecht, Daniel Cabarcas, Gerhard Pfister and
Stefan Steidel for helpful discussions.
Moreover, we would also like to thank  the {\sc Singular} team at TU 
Kaiserslautern for their technical support.
We especially wish to thank the anonymous referees whose comments
improved the paper.

\end{document}